\newcommand{\dt}[1][t]{\,\operatorname{d}\negsp[2]#1} 
\newcommand{\de}[1][\ge]{\,\operatorname{d}\negsp[2]#1} 
\newcommand{\dgt}[1][\gt]{\,\operatorname{d}\negsp[2]#1} 
\newcommand{\inn}{\operatorname{int}}
\newcommand{\bd}{\operatorname{bd}}
\renewcommand{\simt}{S}
\newcommand{\simS}{\ensuremath{\mathbf{S}}\xspace}
\newcommand{\simtset}{\ensuremath{\{\simt_1,\ldots,\simt_N\}}\xspace}
\newcommand{\Words}{\sW}     
\newcommand{\Mink}{\sM}
\newcommand{\Minka}{\ensuremath{\cj{\Mink}}\xspace}
\newcommand{\one}{\mathbbm{1}}
\newcommand{\ee}{\operatorname{e}}
\newcommand{\defeq}{:=}
\newcommand{\eqdef}{=:}
\renewcommand{\tilde}{\widetilde}
  \let\oldmarginpar\marginpar
  \renewcommand\marginpar[1]
\numberwithin{theorem}{section}
\numberwithin{equation}{section}
\title[Lattice nonmeasurability in the pluriphase case]
{Lattice-type self-similar sets with pluriphase generators fail to be Minkowski measurable}
\author{Sabrina Kombrink}
  \address{Universit\"at Bremen, FB 3 -- Mathematik, Bibliothekstr.1, 28359 Bremen, Germany}
  \email{\href{mailto:kombrink@math.uni-bremen.de}{kombrink@math.uni-bremen.de}}
\author{Erin P. J. Pearse}
  \address{California Polytechnic State University, Mathematics Department, San Luis Obispo, CA 93407-0403 USA}
  \email{\href{mailto:epearse@calpoly.edu}{epearse@calpoly.edu}}
  \author{Steffen Winter}
  \address{Karlsruhe Institute of Technology, Department of Mathematics, 76128 Karlsruhe, Germany}
  \email{\href{mailto:steffen.winter@kit.edu}{steffen.winter@kit.edu}}
  \keywords{
  Self-similar set, lattice and nonlattice case, Minkowski dimension, Minkowski measurability, Minkowski content.}
  \subjclass[2010]{
    Primary: 11M41, 28A12, 28A75, 28A80, 52A39, 52C07,
    Secondary: 11M36, 28A78, 28D20, 42A16, 42A75, 52A20, 52A38
    }
    \thanks{Version of \today} 
\begin{document}

\begin{abstract}
	A long-standing conjecture of Lapidus claims that under certain conditions, self-similar fractal sets fail to be Minkowski measurable if and only if they are of lattice type. 
 The theorem was established for fractal subsets of \bR by Falconer, Lapidus and v.~Frankenhuijsen, and the forward direction was shown for fractal subsets of \bRd, $d \geq 2$, by Gatzouras. 
Since then, much effort has been made to prove the converse. In this paper, we prove a partial converse by means of renewal theory. Our proof allows us to recover several previous results in this regard, but is much shorter and extends to a more general setting; several technical conditions appearing in previous versions of this result have now been removed.
\end{abstract}

\maketitle

\section{Introduction}\label{sec:intro} 

We address the Minkowski measurability (see Def.~\ref{def:Minkowski-measurable}) for self-similar sets in $\mathbb R^d$. In particular, we attempt to characterize this property in terms of the lattice properties of the underlying iterated function system (IFS). Let $S=\{S_1,\dots,S_N\}$ denote an IFS in which each $S_i$ is a contractive similarity acting on $\mathbb R^d$, called a \emph{self-similar system} in the sequel. Further, let $F\subseteq\mathbb R^d$ denote the self-similar set which is the unique non-empty compact set satisfying $F = \bigcup_{i=1}^N S_i F$; see \cite{Hut}. If the scaling ratio of $S_i$ is denoted by $r_i$, then the IFS is said to be \emph{lattice} if there is an $r>0$ such that each $r_i$ can be written as $r^{k_i}$ for some integer $k_i \in \mathbb N$ (see Def.~\ref{def:lattice}), otherwise, the IFS is said to be \emph{nonlattice}.

For $d=1$, it was proven (see \cite{Falconer95,FGCD,KessebohmerKombrink13}) under very general conditions that $F$ is Minkowski measurable if and only if the IFS is nonlattice. These conditions include that the Minkowski dimension $D$ of $\attr$ lies strictly between 0 and 1, 
 and that the IFS satisfies the open set condition (OSC), see Def.~\ref{def:OSC}.
It was conjectured in \cite[Conj.~3]{Lap:Dundee} that this equivalence statement should remain true for $d\geq 2$, when $d-1<D<d$. In \cite{Gat}, Gatzouras was able to prove and strengthen (under the OSC) one direction of this conjecture, namely, that for arbitrary $d \in\mathbb N$ and $D\in(0,d)$, the self-similar attractor $F\subset\mathbb R^d$ is Minkowski measurable when the IFS is nonlattice. 
It is an open problem to prove the converse, and this has been a very active area; see, for example, \cite{TFCD, Pointwise, Minko, RatajWinter2, Kombrink:thesis, Kocak1}. Our results in this paper give some further progress towards establishing the converse, i.e., showing that the attractor of a lattice self-similar system is not Minkowski measurable. At the same time we demonstrate that 
it is essential to exclude sets of integer Minkowski dimension $D$ from the conjecture but that it is plausible to extend Lapidus' conjecture to the setting of non-integer $D\in(0,d)$; see Rem.~\ref{thm:trivial-issues}.

 We work in a setting which includes -- to the best of our knowledge -- all the previous cases in which the Minkowski measurability of lattice self-similar sets has been addressed (see the detailed discussion at the very end of the introduction) and which extends the class of sets covered in several directions. For instance, we do not require the set $F$ to possess a \emph{compatible} feasible open set $O$ satisfying the OSC, that is, one which satisfies $\bd O\subset F$. This allows in particular to treat self-similar sets of any Minkowski dimension $D$ and removes the assumption $D>d-1$, which is present in all previous work known to the authors.
  Instead of compatibility, we will assume throughout that the feasible open set $O$ we work with  satisfies the following additional conditions:
\begin{itemize}
	\item (Strong OSC)  $O \cap F \neq \es$;
	\item (Projection condition) $S_i O \subseteq \overline{\pi_{F}^{-1}(S_iF)}$ for $i=1,\ldots,N$.
\end{itemize}
Here $\pi_F$ denotes the metric projection onto $F$ (see Def.~\ref{def:metproj}) and $\overline{A}$ denotes the closure of $A\subset\mathbb R^d$.
  It follows from results in  \cite{BandtNguyenRao} that one can always find a feasible open set satisfying both the strong OSC (SOSC) and the projection condition whenever OSC is satisfied (the ``central open set''; see Rem.~\ref{rmk:centralopen}). Therefore, these two conditions alone do not restrict at all the class of self-similar sets considered; they should be seen as a convenient choice of feasible open set we make in order to simplify the problem. 
%
The only further (rather restrictive) assumption we require is the following. We suppose that the $\ge$-parallel set $F_{\ge}$ of $F$ (see \eqref{eqn:A_ge}) is well-behaved in the set
\linenopax
\begin{align}
  \label{eq:Gamma1}\Gamma=\Gamma(O)\defeq O\setminus \bigcup_{i=1}^N S_i O.
\end{align}
More precisely, it is required that the parallel volume $\lambda_d( F_\varepsilon\cap \Gamma)$ of $F$ restricted to the set $\Gamma$ (where $\gl_d$ is Lebesgue measure) is piecewise polynomial in the variable $\varepsilon$. In this case we call the set $F$  \emph{pluriphase} with respect to $\Gamma$ (and we call $F$ \emph{monophase} with respect to $\Gamma$ if this parallel volume is a polynomial), see Def.~\ref{def:pluriphase} and the discussion afterwards for details.
The pluriphase condition is a simplifying assumption on the geometry of $F$ which would ideally be removed in future work. Note: the definitions of pluriphase and monophase are extended here to the 
present more general setting. In case of a compatible feasible set it reduces to the pluriphase/monophase assumption made in earlier work on this topic, e.g. in \cite{Kocak1, Kombrink:thesis, Minko}.). See also Rem.~\ref{rmk:pluriphase:generator}.

The main results of this paper are summarized in the following statement.

\begin{theorem}\label{thm:main-result}
	Let $F\subset\bR^d$ be a self-similar set which is the attractor of a lattice self-similar system $S=\{S_1,\dots,S_N\}$, $N\geq 2$ satisfying the OSC. Let $D:=\dim_{\mathcal M}F$ denote its Minkowski dimension.
 \begin{enumerate}
   \item\label{it:intro:fulldim} If $D=\dim \operatorname{aff} F$ (where the latter is the dimension of the affine hull of $F$), then $F$ is Minkowski measurable. In particular, this is true for $D=d$.
   \item\label{it:intro:noninteger} Suppose $D<d$ is not an integer and there exists a strong feasible set $O$ satisfying the projection condition such that $F$ is pluriphase with respect to the set $\Gamma(O)$. Then $F$ is not Minkowski measurable.
   \item\label{it:intro:integer} Suppose $D<d$ is an integer and there exists a strong feasible set $O$ satisfying the projection condition such that $F$ is pluriphase with respect to the set $\Gamma(O)$. Then $F$ is Minkowski measurable if and only if certain algebraic relations involving the data of the pluriphase representation are satisfied. In particular, these relations are never satisfied in the case when $F$ is monophase with respect to $\Gamma(O)$.
        \end{enumerate}
\end{theorem}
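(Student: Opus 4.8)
The plan is to prove \ref{it:intro:fulldim} by an elementary geometric argument and \ref{it:intro:noninteger}--\ref{it:intro:integer} by renewal theory. For \ref{it:intro:fulldim}, set $m\defeq\dim\operatorname{aff}F$ and work inside the $m$-dimensional affine hull $A$ of $F$. Writing $x=x_A+x_\perp$ along $A\oplus A^\perp$ and using $\operatorname{dist}(x,F)^2=\operatorname{dist}(x_A,F)^2+|x_\perp|^2$, a fibrewise integration over the $(d-m)$ normal directions gives
\[
  \varepsilon^{m-d}\lambda_d(F_\varepsilon)=\omega_{d-m}\int_{\{x_A:\operatorname{dist}(x_A,F)\le\varepsilon\}}\bigl(1-\varepsilon^{-2}\operatorname{dist}(x_A,F)^2\bigr)^{(d-m)/2}\,dx_A,
\]
where $\omega_{d-m}$ is the volume of the unit ball in $\mathbb R^{d-m}$; as $\varepsilon\to0^+$ the right-hand side tends to $\omega_{d-m}\,\lambda_m(F)$, since the $A$-parallel sets decrease to the compact set $F$ and dominated convergence applies. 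As $D=m$, the OSC forces $0<\mathcal H^m(F)<\infty$, hence $0<\lambda_m(F)<\infty$, so $F$ is Minkowski measurable with $\mathcal M(F)=\omega_{d-m}\lambda_m(F)$; cf.\ Rem.~\ref{thm:trivial-issues}. (For $D=d$ this just says $\lambda_d(F_\varepsilon)\to\lambda_d(F)\in(0,\infty)$.)

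For \ref{it:intro:noninteger} and \ref{it:intro:integer} fix a strong feasible open set $O$ with the projection condition such that $F$ is pluriphase with respect to $\Gamma=\Gamma(O)$, and put $R(\varepsilon)\defeq\lambda_d(F_\varepsilon\cap\Gamma)$. \emph{Step 1 (renewal equation).} The $S_iO$ are pairwise disjoint, $O=\Gamma\cup\bigcup_i S_iO$ up to a $\lambda_d$-null set, and the projection condition forces $\pi_F(x)\in S_iF$ for $x\in S_iO$, hence $\operatorname{dist}(x,F)=\operatorname{dist}(x,S_iF)$ and $F_\varepsilon\cap S_iO=S_i(F_{\varepsilon/r_i}\cap O)$; taking volumes yields the exact identity $\lambda_d(F_\varepsilon\cap O)=\sum_{i=1}^N r_i^d\lambda_d(F_{\varepsilon/r_i}\cap O)+R(\varepsilon)$. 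With $\varepsilon=e^{-u}$, $\phi(u)\defeq e^{u(d-D)}\lambda_d(F_{e^{-u}}\cap O)$, $\psi(u)\defeq e^{u(d-D)}R(e^{-u})$ and $\sum_i r_i^D=1$ (Moran), this becomes the renewal equation $\phi=\nu*\phi+\psi$ for the probability measure $\nu=\sum_i r_i^D\,\delta_{\log(1/r_i)}$ on $(0,\infty)$. A separate argument — this is where the strong OSC and the projection condition take over the role played in earlier work by compatibility ($\bd O\subset F$), and is what lets us dispense with the hypothesis $D>d-1$ — shows that $F$ is Minkowski measurable if and only if $\lim_{u\to\infty}\phi(u)$ exists. \emph{Step 2 (renewal theorem).} In the lattice case all $\log(1/r_i)$ lie in $\mathsf p\mathbb Z$ with $\mathsf p=\log(1/r)$ maximal, and $\psi$ decays exponentially at $\pm\infty$ (at $+\infty$ because boundedness of $\phi$, together with $\dim_{\mathcal M}F=D$, forces the first polynomial of $R$ to involve only powers $\varepsilon^{d-k'}$ with $k'<D$, i.e.\ $R(\varepsilon)=o(\varepsilon^{d-D})$; at $-\infty$ because $R\to\lambda_d(\Gamma)$ and $d-D>0$), hence $\psi$ is directly Riemann integrable. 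The arithmetic renewal theorem then gives $\phi(u)-G(u)\to0$ with $G$ the $\mathsf p$-periodic function $G(u)=m_\nu^{-1}\sum_{j\in\mathbb Z}\psi(u+j\mathsf p)$, $m_\nu=\sum_i r_i^D\log(1/r_i)$. Thus $F$ is Minkowski measurable if and only if $G$ is constant, i.e.\ if and only if
\[
  \widetilde R(D-d+\ii k\mathbf p)=0\quad\text{for all }k\in\mathbb Z\setminus\{0\},\qquad \widetilde R(s)\defeq\int_0^\infty R(\varepsilon)\,\varepsilon^{s-1}\,d\varepsilon,
\]
where $\mathbf p=2\pi/\mathsf p$, the $k$-th Fourier coefficient of $G$ being $(\mathsf p\,m_\nu)^{-1}\widetilde R(D-d+\ii k\mathbf p)$ and $\widetilde R$ taken by analytic continuation in the borderline case $D\in\mathbb Z$.

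\emph{Step 3 (exploiting the pluriphase structure).} Write the pluriphase representation with breakpoints $0=t_0<t_1<\dots<t_n$, pieces $R|_{(t_{\ell-1},t_\ell]}=P_\ell=\sum_{k'}\gamma_{k',\ell}\varepsilon^{d-k'}$ and constant tail $R\equiv\lambda_d(\Gamma)$ on $(t_n,\infty)$, with $P_1$ confined to powers $\varepsilon^{d-k'}$, $k'<D$ (as in Step 2). Integrating $\widetilde R$ piece by piece, telescoping across the breakpoints and using continuity of $\varepsilon\mapsto\lambda_d(F_\varepsilon\cap\Gamma)$, one obtains for $k\ne0$
\[
  \widetilde R(D-d+\ii k\mathbf p)=\sum_{\tau}\tau^{\,\ii k\mathbf p}\,Q_\tau(\ii k\mathbf p),\qquad Q_\tau(w)=\sum_{k'}\frac{b_{\tau,k'}}{(D-k')+w},
\]
a finite sum over the distinct breakpoints $\tau$, where the rational functions $Q_\tau$ have real coefficients $b_{\tau,k'}$ explicit in the $\gamma_{k',\ell}$, the $t_\ell$ and $\lambda_d(\Gamma)$; the measurability criterion thereby becomes a finite system in this geometric data. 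In the monophase case there is only $\tau=\varepsilon_0$, so the criterion reads $Q_{\varepsilon_0}(\ii k\mathbf p)=0$ for all $k\ne0$; but $Q_{\varepsilon_0}(w)=\sum_{k'<D}\gamma_{k',1}\varepsilon_0^{D-k'}\bigl((D-k')+w\bigr)^{-1}-\lambda_d(\Gamma)\varepsilon_0^{D-d}\bigl((D-d)+w\bigr)^{-1}$ has a simple pole at $w=d-D$ with residue $-\lambda_d(\Gamma)\varepsilon_0^{D-d}\ne0$, uncancelled since $k'<D<d$, so $Q_{\varepsilon_0}$ is a nonzero rational function and can vanish at only finitely many $\ii k\mathbf p$; hence $F$ is not Minkowski measurable. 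This proves the monophase assertion of \ref{it:intro:integer} (and holds for any $D<d$). For general pluriphase $R$ one analyzes when $\sum_\tau\tau^{\ii k\mathbf p}Q_\tau(\ii k\mathbf p)$ vanishes at all $\ii k\mathbf p$: this is shown to force the breakpoints to be $r$-commensurable (each $t_\ell/t_{\ell'}$ an integer power of $r$) and the associated rational functions to cancel in groups, which — unwound — produces exactly the algebraic relations of \ref{it:intro:integer}; and one shows these relations cannot hold when $D\notin\mathbb Z$, because the codimension $d-D$ that necessarily appears (through the constant tail $\lambda_d(\Gamma)$) cannot be balanced against the integer codimensions available on the pieces, $P_1$ being restricted to $k'<D$. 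This yields \ref{it:intro:noninteger} and completes \ref{it:intro:integer}.

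The renewal set-up of Steps 1--2 is by now fairly standard; I expect the main obstacle to be Step 3 — the algebraic analysis of the vanishing condition $\widetilde R(D-d+\ii k\mathbf p)=0$, in particular pinning down precisely which relations among the pluriphase data characterize Minkowski measurability in \ref{it:intro:integer} and proving their incompatibility with $D\notin\mathbb Z$ in \ref{it:intro:noninteger}. A secondary, more technical, obstacle is the reduction in Step 1 from the full parallel volume $\lambda_d(F_\varepsilon)$ to the localized one $\lambda_d(F_\varepsilon\cap O)$ in the absence of compatibility, which is exactly what removes the standing assumption $D>d-1$ of all earlier treatments.
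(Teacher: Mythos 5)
Your part \ref{it:intro:fulldim} argument (fibrewise integration over the normal directions, plus positivity of $\mathcal H^m(F)$ under the OSC) is correct and is in fact a self-contained version of what the paper delegates to references. Your Steps 1--2 are also essentially the paper's route (the renewal equation for $\varepsilon^{D-d}\lambda_d(F_\varepsilon\cap O)$ and the arithmetic renewal theorem; cf.\ Thm.~\ref{thm:subresult} and Cor.~\ref{cor:subresult}), but the two places you flag or gloss over are exactly where real work is needed, and your stated justifications do not close them. First, the reduction $\lambda_d(F_\varepsilon\setminus O)=o(\varepsilon^{D-d})\varepsilon^{d-D}$ is not routine: the paper obtains it from the estimate $\lambda_d(F_\varepsilon\cap(\mathbf S O^c)_\varepsilon)\le c\,\varepsilon^{d-D+\gamma}$ of \eqref{eq:FeoutsideSO} (quoted from Winter's thesis, valid under the strong OSC); you assert that ``a separate argument'' exists but supply none. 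Second, and more seriously, the same estimate (applied to $\Gamma\subset\mathbf S O^c$) is what forces $\kappa_{1,k}=0$ for all $k\ge D$. Your justification --- that $\dim_{\mathcal M}F=D$ together with boundedness of $\phi$ forces the first polynomial piece of $R$ to involve only powers $\varepsilon^{d-k'}$ with $k'<D$ --- only yields $k'\le D$: a term $\kappa_{1,D}\varepsilon^{d-D}$ is perfectly compatible with $\varepsilon^{D-d}\lambda_d(F_\varepsilon\cap O)$ being bounded. When $D\notin\mathbb Z$ this is harmless ($k'$ is an integer, so $k'\le D$ gives $k'<D$), but when $D$ is an integer the borderline term $\kappa_{1,D}\varepsilon^{d-D}$ is precisely what would make $\psi$ fail to decay at $+\infty$ and invalidate your appeal to the renewal theorem; the paper's proof of Thm.~\ref{thm:pluriphase-integer-result} leans on $\kappa_{1,D}=0$ at exactly this point.

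Your Step 3 diverges from the paper and is where the argument is thinnest. You pass to Fourier coefficients of the periodic limit and a Mellin transform $\widetilde R$, which is essentially the complex-dimensions/Mellin route of \cite{Minko} and \cite{Kocak1} that this paper deliberately avoids. Your monophase computation (a single nonzero rational function with an uncancelled pole at $w=d-D$ cannot vanish at infinitely many points $\ii k\mathbf p$) is sound. But for general pluriphase data, everything that actually constitutes parts \ref{it:intro:noninteger} and \ref{it:intro:integer} --- that vanishing of all nonzero Fourier coefficients forces $r$-commensurability of the breakpoints, that the cancellations ``unwind'' to the stated algebraic relations, and that these relations are impossible when $D\notin\mathbb Z$ --- is asserted rather than proved; the heuristic that ``the codimension $d-D$ cannot be balanced'' is not an argument. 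The paper's route here is both different and much shorter: substituting the pluriphase representation into the series for $p$ and summing geometric series gives $p(\varepsilon)=\sum_{k}\frac{\varepsilon^{D-k}}{1-r^{D-k}}\,\eta_k(\varepsilon)$ with each $\eta_k$ piecewise constant on $(rg,g]$ (see \eqref{eqn:p(e)-in-terms-of-eta_k}, with the $k=D$ term modified in the integer case); on each subinterval where the $\eta_k$ are constant, $p'\equiv0$ would contradict the linear independence of the powers $\varepsilon^{D-k-1}$ combined with the strict positivity of $p$ when $D\notin\mathbb Z$, while for integer $D$ the same linear-independence argument isolates the $k=D$ term and produces the explicit relations of Thm.~\ref{thm:pluriphase-integer-result}. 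As written, your Step 3 establishes only the monophase assertion; to complete \ref{it:intro:noninteger} and the characterization in \ref{it:intro:integer} you would need either to carry out this direct computation or to genuinely execute the Mellin analysis you sketch.
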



Part \ref{it:intro:noninteger} is reformulated and proved in Thm~\ref{thm:pluriphase-result}; a more precise formulation of part \ref{it:intro:integer}, the case when $D$ is an integer, is given in Thm~\ref{thm:pluriphase-integer-result}.
We stress that in the situation of part \ref{it:intro:integer} both cases are possible: lattice sets in $\bR^d$ of integer Minkowski dimension $D<d$ can be Minkowski measurable or not; see Rem.~\ref{thm:trivial-issues}.

As for part \ref{it:intro:fulldim}, we can give a short proof immediately.

\begin{proof}[Proof of Thm.~\ref{thm:main-result}\ref{it:intro:fulldim}]
As a function of \ge, the tubular volume $\gl_d(F_\ge)$ is continuous and strictly increasing on $(0,\iy)$ for any compact set $F \ci \bRd$, and thus $\sM_d(F) := \lim_{\ge \searrow 0} \gl_d(F_\ge) = \gl_d(F)$; see Def.~\ref{def:Minkowski-measurable} for the full definition of Minkowski content $\sM_d$. In other words, the limit must exist and it is not difficult to see that it coincides with $\gl_d(F)$. Now, if $F$ is a self-similar set with $\dim_{\sM} F=d$ satisfying OSC, then it is well known that $F$ has interior points (see e.g. \cite{Schief, GeometryOfSST}) and therefore $\sM_d(F) = \gl_d(F)>0$ . Thus $F$ is Minkowski measurable as claimed.  
Now, if $F \ci \bRd$ is a self-similar set such that $\dim_{\sM} F = \dim \operatorname{aff} F$, then Minkowski measurability follows from working in the affine hull and observing that Minkowski measurability is independent of the dimension of the ambient space; cf.\cite{Resman,Kneser55} and the references therein.
\end{proof}

\begin{remark}\label{thm:trivial-issues}
	The above proof indicates that if $\dim_{\sM} F = \dim \operatorname{aff} F$, then $F$ is Minkowski measurable regardless of whether it is lattice or nonlattice.
	The significance of this point is as follows: one may be naturally led to suppose that the original conjecture of Lapidus may be extended to include sets with Minkowski dimension $D \in (0,d)$ (instead of requiring $D \in (d-1,d$), but Thm.~\ref{thm:main-result}\ref{it:intro:fulldim} shows there are (somewhat trivial) counterexamples with $\dim_{\sM} F = \dim \operatorname{aff} F$.
	For the case when $D<d$ is an integer (as in Thm.~\ref{thm:main-result}\ref{it:intro:integer}), a class of examples of Minkowski measurable lattice sets is discussed in Sec.~\ref{sec:examples}, Ex.~\ref{ex:square}. At this point, all known examples of this type can be represented as the embedding in $\bR^d$ of a self-similar set in $\bR^D$. It may well be that this is the only way such a thing is possible.
\end{remark}

The proofs of the other parts of Thm.~\ref{thm:main-result} are obtained using the elementary tools of probabilistic renewal theory. 
These are combined with recent results in \cite{MinkContGenForm}, where the projection condition was observed to be essential for deriving renewal equations in terms of the generator of an associated tiling.  
Based on the renewal theorem, we obtain in Thm.~\ref{thm:subresult} and Cor.~\ref{cor:subresult}  a characterization of Minkowski measurability in terms of a periodic function $p$ (see \eqref{eqn:p(e)}) which can be expressed completely in terms of the parallel volume $\lambda_d(F_\varepsilon\cap \Gamma)$ of $F$ restricted to $\Gamma$. This result may be of independent interest for future work as it does not require the pluriphase condition and thus applies to all (nontrivial) self-similar sets. We use this result to prove our main results, the non-Minkowski measurability in the case when $F$ is pluriphase w.r.t.\ the set $\Gamma$, by exploiting and refining an idea in \cite{Kombrink:thesis}.

Before moving on to the results, we explain in more detail the improvements obtained here compared to previous results from \cite{Kocak1,Kombrink:thesis,Minko}. The assumptions of \cite{Kombrink:thesis} and \cite{Minko} only differ slightly, and combining \cite[Thm.~2.38]{Kombrink:thesis} with \cite[Thm.~5.4]{Minko} the resulting nonmeasurability result applies to the case when the following requirements are met:

\begin{enumerate}[label=(\alph*)]
\item \label{item:OSC} The open set condition (see Def.~\ref{def:OSC}) holds for a feasible open set $O$ with $\bd O \ci F$, which in particular implies $d-1 < D < d$.
\item \label{item:outer-Minkowski} The $D$-dimensional outer Minkowski content of $\cj{O}$ is finite (see \cite[Def.~5.2]{Minko}).
\item \label{item:one-gen} The \emph{generator} $G = O \less \bigcup_{i=1}^N \cj{S_i O}$ (see Def.~\ref{def:self-similar-tiling}) has only finitely many connected components.
\item \label{item:monophase} Each connected component of the generator $G$ is \emph{monophase} (see Def.~\ref{def:pluriphase}).
\end{enumerate}
In our main result, Thm.~\ref{thm:main-result}, we remove \ref{item:OSC}--\ref{item:one-gen}, and replace \ref{item:monophase} with the more general condition that $\attr$ is pluriphase w.r.t.\ $\gG$. 
The significance of removing \ref{item:OSC} is that the results of the present paper (in particular, Thm.~\ref{thm:main-result}) cover sets of any Minkowski dimensions $D \leq d$. 
Other notable improvements of the present article are the comparatively shorter and simpler proofs based on probabilistic renewal theory. In \cite{Minko}, the more powerful (but also more complicated) apparatus of \emph{fractal sprays and complex dimensions}  was used; the approach taken in \cite{Kombrink:thesis} uses renewal theory in symbolic dynamics (motivated by \cite{Lalley}) yielding results for the more general class of self-conformal sets. When the renewal theorem for symbolic dynamics is restricted to the self-similar setting, it boils down to the probabilistic renewal theorem (as used in \cite{Lalley88}) which we apply directly here. This direct application makes the proofs significantly shorter and simpler.
It should be noted that under the additional assumption that $O$ coincides with the interior of the convex hull of $F$, a result similar to \cite[Thm.~5.4]{Minko} / \cite[Thm.~2.38]{Kombrink:thesis} was independently proven in \cite{Kocak1} using Mellin transforms. See \cite{Kocak2013a,Kocak2013b,Kocak2014} for further interesting and related results.

The structure of the article is as follows. In Sec.~\ref{sec:prelim} we lay the foundations for stating and proving our main results in Sec.~\ref{sec:results}. The final section, Sec.~\ref{sec:examples}, is devoted to examples demonstrating our findings.

\section{Preliminaries}\label{sec:prelim}

We present the terminology required to state and prove our main theorems. 

\subsection{Minkowski measurability}\label{sec:Minkmb}
	Let $A$ be a compact subset in Euclidean space $\bR^d$ and $\ge\geq 0$. The \emph{$\ge$-parallel set} of $A$ (or  \emph{$\ge$-tubular neighborhood} of $A$) is
	\linenopax
	\begin{align}\label{eqn:A_ge}
	  A_\ge\coloneqq\{x\in \bR^d: d(x,A)\leq \ge\},
	\end{align}
	where $d(x,A)\coloneqq\inf\{\|x-a\|:a\in A\}$ is the Euclidean distance of $x$ to the set $A$.
	
	A \emph{tube formula} for $A$ is an explicit formula for $\lambda_d(A_\ge)$, as a function of \ge, where $\lambda_d$ denotes the $d$-dimensional Lebesgue measure; see \cite{TFCD,Pointwise,Minko} for a discussion of fractal tube formulas. The volume $\lambda_d(A_\ge)$ is referred to as the \emph{$\ge$-parallel volume} of $A$ and we call $\lambda_d(A_\ge\cap B)$ the \emph{$\ge$-parallel volume of $A$ inside $B$} for any Borel set $B\ci\mathbb R^d$.

\begin{defn}\label{def:Minkowski-measurable}
	Let $A$ be a compact subset of Euclidean space $\bR^d$. For $0\le \alpha\le d$, we denote by
	\linenopax
	\begin{align} \label{eqn:M-content}
	  \sM_\alpha(A)\coloneqq\lim_{\ge\to 0^+}\ge^{\alpha-d}\lambda_d(A_\ge)
	\end{align}
	the \emph{$\alpha$-dimensional Minkowski content} of $A$ whenever this limit exists (as a value in $[0,\infty]$).

	If $\sM_\alpha(A)$ exists and satisfies $0<\sM_\alpha(A)<\infty$, then $A$ is called \emph{Minkowski measurable (of dimension $\alpha$)}, and $\dim_\sM A\coloneqq \ga$ is the \emph{Minkowski dimension} of $A$.
	If the limit in \eqref{eqn:M-content} does not exist, one may consider the logarithmic Ces\`{a}ro average known as the \emph{($\alpha$-dimensional) average Minkowski content} (which always exists in the case of self-similar sets $A$, see \cite{Gat}). It is defined by
	\linenopax
	\begin{align*}
		\Minka_\ga(A) \coloneqq \lim_{\gd\to 0^+} \frac1{|\ln \gd|} \int_\gd^1 \ge^{\ga-d} \gl_d(A_\ge) \frac{\de}{\ge}.
	\end{align*}
	whenever this limit exists.
\end{defn}

\subsection{Self-similar tilings and their generators}\label{sec:terms}

Let $S = \simtset$, $N \geq 2$ be an iterated function system (IFS), where each $\simt_i$ is a  similarity mapping of \bRd with scaling ratio $r_i$, where $0 < r_i < 1$.
Then we call $S$ a \emph{self-similar system}.
For $A \ci \bRd$, we write
\linenopax
\begin{align}\label{eqn:S-action}
	\simS A \coloneqq \bigcup_{i=1}^N \simt_i(A).
\end{align}
The \emph{self-similar set} $\attr$ generated by the IFS $S$
 is the unique compact and nonempty solution of the fixed-point equation $\attr = \simS \attr$\,; cf.~\cite{Hut}, also called the \emph{attractor} of $S$.
%

We study the parallel volume of the attractor by studying the parallel volume inside a certain tiling of its complement, which is constructed via the IFS as described below. The tiling construction was introduced in \cite{SST} and developed in \cite{GeometryOfSST}, where tilings by open sets were studied; see also \cite{Pe2,TFCD,Pointwise,Minko}. In this paper we consider self-similar tilings in a generalized sense, with the tiles not necessarily being open (see Def.~\ref{def:self-similar-tiling}).
%
%
%
%
The construction of a self-similar tiling requires the IFS to satisfy the \emph{open set condition} and a \emph{nontriviality condition}, as described in the following two definitions.

\begin{defn}\label{def:OSC}
  A self-similar system $S=\simtset$ 
satisfies the \emph{open set condition} (OSC) if and only if there is a nonempty open set $O \ci \bRd$ such that
  \linenopax
  \begin{equation}\label{eqn:def:OSC-containment}
  \begin{array}{ll}
    \displaystyle{\simt_i(O) \ci O}, & i=1, \dots, N \q\text{and}\\
    \displaystyle{\simt_i(O) \cap \simt_j(O) = \es}, &  i \neq j.      
  \end{array}
  \end{equation}
  In this case, $O$ is called a \emph{feasible open set} for $\simtset$
; see \cite{Hut, Falconer, BandtNguyenRao}. If additionally $O\cap F\neq\es$, then $O$ is called a \emph{strong} feasible open set.
\end{defn}
	It was shown in \cite{Schief} that if a self-similar system satisfies OSC, then it possesses a strong feasible open set.

\begin{defn}\label{def:nontriv}
  A self-similar set \attr, which is the attractor of a self-similar system $S=\simtset$ satisfying OSC, is said to be \emph{nontrivial} if there exists a feasible open set $O$ such that
  \begin{equation}\label{eqn:nontriv}
    O\not \ci \cj{\simS O},
  \end{equation}
  where $\cj{\mathbf{S}O}$ denotes the closure of $\mathbf{S}O$; otherwise, \attr is called \emph{trivial}.
\end{defn}

This condition is needed to ensure that the set $\gG = O \setminus \simS O$ in Def.~\ref{def:self-similar-tiling} has nonempty interior.  It turns out that nontriviality is independent of the particular choice of the set $O$. It is shown in \cite{GeometryOfSST} that \attr is trivial if and only if it has nonempty interior, 
which amounts to the following characterization of nontriviality:

\begin{prop}[{\cite[Cor.~5.4]{GeometryOfSST}}]
  \label{cor:OSC-dimension-d-implies-trivial}
  Let $\attr \ci \bRd$ be a self-similar set which is the attractor of a self-similar system satisfying OSC. Then \attr is nontrivial if and only if \attr has Minkowski dimension 
  strictly less than $d$.
\end{prop}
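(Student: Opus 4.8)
The plan is to reduce to the dichotomy recorded above from \cite{GeometryOfSST}, namely that $\attr$ is trivial if and only if $\inn\attr\neq\es$. Granting this, it suffices to prove that, for a self-similar set $\attr\ci\bRd$ satisfying OSC,
\[
  \inn\attr\neq\es\qquad\Longleftrightarrow\qquad\dim_\sM\attr=d,
\]
since the proposition is then obtained by contraposition, bearing in mind that $\dim_\sM\attr\le d$ for every bounded subset of $\bRd$.

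The ``only if'' direction is immediate: if $\inn\attr\neq\es$ then $\attr$ contains an open ball $B$, so $\gl_d(\attr_\ge)\ge\gl_d(B)>0$ for all $\ge>0$, whence $\ge^{\ga-d}\gl_d(\attr_\ge)\to\iy$ as $\ge\searrow0$ for each $\ga<d$; thus $\dim_\sM\attr\ge d$ and therefore $\dim_\sM\attr=d$.

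For the ``if'' direction, assume $\dim_\sM\attr=d$. Under OSC the Minkowski dimension of $\attr$ agrees with its Hausdorff dimension and with the similarity dimension $s$, the unique root of $\sum_{i=1}^N r_i^s=1$; this rests on Schief's theorem $\mathcal H^s(\attr)>0$ (see \cite{Schief, Falconer}). Hence $\sum_{i=1}^N r_i^d=1$, so $\mathcal H^d(\attr)=\gl_d(\attr)>0$. It is a theorem of Schief \cite{Schief} (see also \cite{GeometryOfSST}) that a self-similar set of positive Lebesgue measure satisfying OSC has nonempty interior. A natural route to this is as follows: fix a feasible open set $O$; since $\simt_1 O,\dots,\simt_N O$ are pairwise disjoint subsets of $O$ with $\sum_{i=1}^N\gl_d(\simt_i O)=\sum_{i=1}^N r_i^d\,\gl_d(O)=\gl_d(O)$, we get $\gl_d(O\less\simS O)=0$, and iterating the IFS gives $\gl_d\bigl(O\less\bigcup_{|w|=n}\simt_w O\bigr)=0$ for every $n$. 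The cylinder sets shrink to points as $|w|\to\iy$, so, for a suitably chosen feasible $O$ with $\attr\ci\cl O$, a passage to the limit yields $\gl_d(O\less\attr)=0$. Being open and Lebesgue-null, $O\less\attr$ is empty, so $\es\neq O\ci\attr$; in particular $\inn\attr\neq\es$, which completes the equivalence.

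The reduction to the interior dichotomy and the ``only if'' direction are formal; the real content is the ``if'' direction, and the step I expect to require the most care is the limiting argument passing from $\gl_d\bigl(O\less\bigcup_{|w|=n}\simt_w O\bigr)=0$ for all $n$ to $\gl_d(O\less\attr)=0$ (in particular the choice of a feasible open set adapted to $\attr$), which is precisely where both the open set condition and the absence of metric distortion of the maps $\simt_i$ enter; the technical details are exactly those worked out in \cite{GeometryOfSST} (see also \cite{Schief}).
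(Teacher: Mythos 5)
The paper offers no proof of this proposition: it is imported verbatim from \cite[Cor.~5.4]{GeometryOfSST}, and the only ``argument'' given is the remark immediately preceding it, namely that triviality is equivalent to $\inn\attr\neq\es$. Your proposal correctly supplies exactly the bridge that remark alludes to: taking the interior dichotomy as the cited input, you prove $\inn\attr\neq\es\Leftrightarrow\dim_\sM\attr=d$, and the two halves of that equivalence are sound. The ``only if'' direction is elementary as you say, and the ``if'' direction correctly strings together the standard facts (under OSC the box, Hausdorff and similarity dimensions coincide, so $\dim_\sM\attr=d$ forces $\sum_i r_i^d=1$, whence $\gl_d(\attr)>0$ by Moran--Hutchinson, whence $\inn\attr\neq\es$ by Schief). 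Your sketch of the last step is also essentially right; the measure-zero telescoping $\gl_d\bigl(O\less\bigcup_{|w|=n}\simt_w O\bigr)=0$ and the identity $\attr=\bigcap_n\bigcup_{|w|=n}\simt_w\cl O$ do combine to give $\gl_d(O\less\attr)=0$ for the open set $O\less\attr$, hence $O\ci\attr$. Two small points to be aware of: (1) your computation $\sum_i\gl_d(\simt_iO)=\gl_d(O)$ and the containment $\attr\ci\cl O$ both require $O$ to be bounded, which Def.~\ref{def:OSC} does not stipulate, so you should either assume it or note that a bounded feasible set always exists; (2) the equivalence ``trivial $\Leftrightarrow$ $\inn\attr\neq\es$'' that you take as given is itself the substantive theorem of \cite{GeometryOfSST}, so your argument, like the paper's, ultimately rests on that external result rather than replacing it --- but since the paper explicitly quotes that dichotomy as known, this is a legitimate division of labor and not a gap.
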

Unless explicitely stated otherwise, all self-similar sets considered here are assumed to be nontrivial, and the discussion of a self-similar tiling \tiling implicitly assumes that the corresponding attractor \attr is nontrivial and that the corresponding system satisfies OSC.

Denote the set of all finite \emph{words} formed by the alphabet $\{1,\dots,N\}$ by
\linenopax
\begin{equation}\label{eqn:def:words}
  \Words \coloneqq \bigcup_{k=0}^\iy \{1,\dots,N\}^k\,.
\end{equation}
For any word $w=w_1 w_2\dots w_n \in \Words$, let $r_w \coloneqq r_{w_1}\cdot\ldots\cdot r_{w_n}$ and $\simt_w \coloneqq \simt_{w_1} \circ \dots \circ \simt_{w_n}$. In particular, if $w \in \Words $ is the \emph{empty word}, then $r_w=1$ and $\simt_w=\mathrm{Id}$.

\begin{defn}\label{def:self-similar-tiling}
  Let $O$ be a feasible open set for $\simtset$.
  The \emph{self-similar tiling} $\sT(O)$ associated with the IFS $\{\simt_1,\ldots,\simt_N\}$
  is the collection of open sets
  \linenopax
  \begin{equation}\label{eqn:def:self-similar-tiling}
    \sT(O) \coloneqq \{ \simt_w(G) \suth w \in \Words\},
  \end{equation}
  where the open set $G \coloneqq O \setminus \cj{\mathbf S O}$ is called the \emph{generator} of the tiling. 
  We call the tiling $\widetilde{\sT}(O) \coloneqq \{ \simt_w(\gG) \suth w \in \Words\}$ generated by 
	\linenopax
	\begin{align}\label{eqn:Gamma}
		\gG=\gG(O) \coloneqq O \setminus \simS O,
	\end{align}
 a \emph{self-similar tiling in a generalized sense}, with the tiles not necessarily being open.
\end{defn}

\begin{remark}
	Self-similar tilings
	generated by $G$ were introduced in \cite{Pe2, SST, GeometryOfSST} and further studied in \cite{TFSST, TFCD, Pointwise, Minko, Demir, Kocak2014, Kocak1, Kocak2}. 
	The nomenclature stems from the fact (proved in \cite[Thm.~5.7]{GeometryOfSST}) that $\tiling(O)$ is an \emph{open tiling} of $O$ in the sense that
	\linenopax
	\begin{align}\label{eqn:open-tiling}
	\cj{O}= \cj{\bigcup\nolimits_{w \in \Words} \simt_w(G)}\,,
	\end{align}
	where the \emph{tiles} $\simt_w(G)$ are pairwise disjoint open sets.
	
\end{remark}
 
\begin{figure}
  \centering
  \scalebox{0.7}{\includegraphics{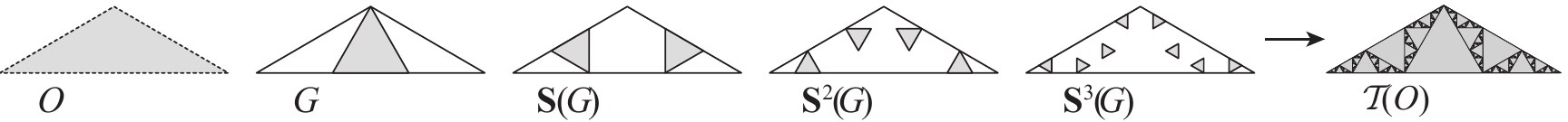}} \\
  \scalebox{0.80}{\includegraphics{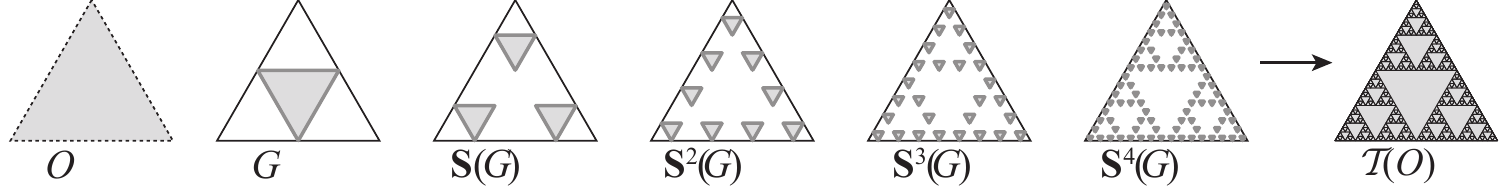}} \vstr[12]\\
  \scalebox{0.82}{\includegraphics{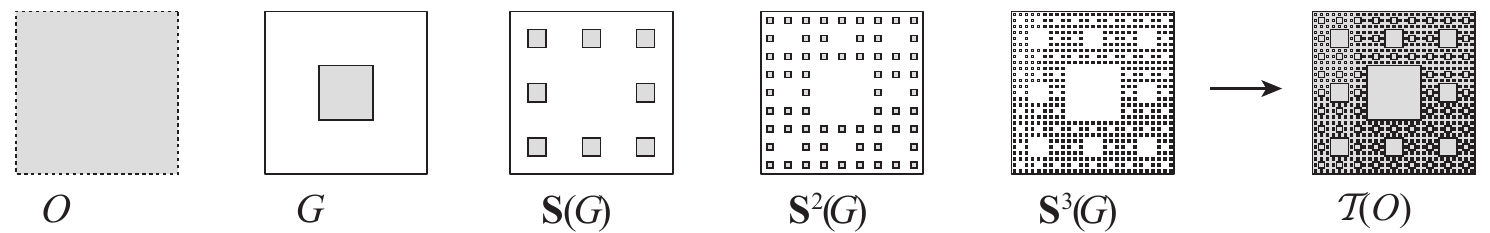}} \vstr[15]
  \caption{\captionsize From top to bottom: a Koch curve tiling, a Sierpinski gasket tiling, and a Sierpinski carpet tiling. In each of these examples, the set $O$ is the interior of the convex hull of \attr, and the set $F$ is monophase w.r.t. $\Gamma$ (see Def.~\ref{def:pluriphase}). 
  The Koch curve tiling does not satisfy the compatibility criterion $\bd O \ci F$ but the other two examples do.}
  \label{fig:examples}
\end{figure}

 We will find it more useful to work in terms of the set $\gG$ instead of $G$ for most of the sequel.
 Observe that $G\subseteq\gG$ and that $\Gamma\setminus G\subset \bigcup_i \bd S_i O$. If $F$ is assumed to be nontrivial, then the set \gG has nonempty interior and we let
	\linenopax
 \begin{equation}\label{eqn:g}
		g \coloneqq \sup\{d(x,F)\suth x \in \gG\}=\sup\{d(x,F)\suth x \in G\}
	\end{equation}
	denote the maximal distance of a point in $\gG$ to $F$. 
	The reason for the use of $\gG$ is that Lebesgue measure is not stable with respect to the closure operation: one may have $\gl_d(U) < \gl_d(\cj{U})$ for an open set $U \ci \bRd$.
	We  remark that
	\linenopax
	\begin{equation}\label{eq:Sunion}
    O = \gG \cup \mathbf{S} O
    = \Gamma \cup \mathbf{S} \Gamma \cup \mathbf{S}^2 O 
= \dots 
= \bigcup_{k=0}^n \mathbf{S}^k \Gamma \cup \mathbf{S}^{n+1} O,    %
	\end{equation}
where all the unions are disjoint, and hence \eqref{eqn:open-tiling} implies
\linenopax
\begin{align}\label{eq:Tunion}
	\cj{O} = \cj{\bigcup_{w\in \Words} S_w \gG}.
\end{align}
Therefore, $\widetilde{\sT}(O)$ from Def.~\ref{def:self-similar-tiling} gives a tiling of $O$, where the tiles $S_w\gG$ are pairewise disjoint but not necessarily open, justifying the term self-similar tiling in a generalized sense.
Also, \eqref{eq:Sunion} allows for the following nice decomposition of the $\ge$-parallel volume of the attractor which is used in the proof of Thm.~\ref{thm:subresult}:
	\linenopax
	\begin{equation}\label{eq:Tdecomp}
    \lambda_d(F_{\ge})=\sum_{i=1}^N\lambda_d(F_{\ge}\cap S_i O)+\lambda_d(F_{\ge}\cap \Gamma)+\lambda_d(F_{\ge}\setminus O).
	\end{equation}
 This representation is particularly useful for sets $O$ satisfying the projection condition.

\begin{defn}\label{def:metproj}\label{def:projection}
     For a compact set $A\ci\mathbb R^d$, we let $\pi_A$ denote the \emph{metric projection} onto $A$. It is defined on the set of points $x\in\mathbb R^d$ which have a unique nearest neighbour $y$ in $A$ by $\pi_A(x)=y$. Let $O$ be a feasible open set of the self-similar system $\{S_1,\ldots, S_N\}$ with attractor $F$. Then $O$ is said to satisfy the \emph{projection condition} if
\linenopax
\begin{align}\label{eq:PC}
     S_iO\ci\overline{\pi_F^{-1}(S_iF)}\quad\text{for}\ i=1,\ldots,N.
     \end{align}
\end{defn}
If the projection condition is satisfied then
\linenopax
\begin{equation}\label{eq:proj}
   F_{\ge}\cap S_iO=(S_iF)_{\ge}\cap S_iO
   \end{equation}
for each $\ge>0$ and $i=1,\ldots,N$ (see \cite[Lem.~3.19]{MinkContGenForm}).

\begin{remark}\label{rmk:centralopen}
	The \emph{central open set} is a particular choice of feasible open set that exists for any IFS satisfying the OSC; it is defined and studied in \cite{BandtNguyenRao}. Because of its definition in terms of metric projections (see \cite{BandtNguyenRao}), it is easy to see that the central open set will always satisfy the projection condition. It is also clear that $F$ is contained in the central open set, so the \emph{strong} condition is also automatically satisfied. Therefore, it is always possible to find a strong feasible open set which satisfies the projection condition, as long as the OSC holds. A proof of these facts is given in \cite[Prop.~3.17]{MinkContGenForm}.
\end{remark}

\begin{defn}\label{def:pluriphase}
	For a given IFS and a fixed feasible open set $O$, we call the attractor $F$ \emph{pluriphase with respect to the set}  $\Gamma=\Gamma(O)$ (as defined in \eqref{eqn:Gamma}) if and only if there exists a finite partition of the interval $(0,\infty)$ with partition points $0\eqdef a_0<a_1<\dots<a_{M-1}<a_M\defeq g$ such that, for $\ge >0$,
\linenopax
\begin{align}\label{eqn:pluriphase}
	\gl_d(F_\ge \cap \gG) = \sum_{m=1}^M \one_{(a_{m-1},a_m]}(\ge) \sum_{k=0}^d \gk_{m,k} \ge^{d-k}
		+ \one_{(g,\iy)}(\ge) \gl_d(\gG),
\end{align}
for some constants $\gk_{m,k} \in \bR$, where $\one$ denotes a characteristic function and $g$ is as in \eqref{eqn:g}. 
We assume that the representation in \eqref{eqn:pluriphase} is given with $M$ minimal, so that 
for each $m=1,\dots,M$, there exists a $k \in \{0,\dots,d\}$ with $\gk_{m,k} \neq \gk_{m-1,k}$.
Imposing minimality of $M$, we call $F$ \emph{monophase with respect to} $\Gamma$ if and only if $M=1$ in the above representation.
\end{defn}
	
\begin{remark}\label{rem:pluriphase} 
At the time of writing, there is no known characterization of the pluriphase or monophase conditions in terms of the self-similar system $\{\simt_i\}_{i=1}^N$. However, it is known from \cite{Kocak2} that a convex polytope in \bRd is monophase (with Steiner-like function of class $C^{d-1}$) iff it admits an inscribed $d$-dimensional Euclidean ball (i.e., a $d$-ball tangent to each facet). This includes regular polygons in \bRt and regular polyhedra in \bRd, as well as all triangles and higher-dimensional simplices. Furthermore, it was recently shown in \cite{Kocak2} that (under mild conditions), any convex polyhedron in \bRd ($d \geq 1$) is pluriphase, thereby resolving in the affirmative a conjecture made in \cite{TFSST,TFCD,Pointwise}.
 We refer to \cite{Kocak2} for further relevant interesting results.
\end{remark}

\begin{remark}\label{rmk:pluriphase:generator}
  In \cite{TFSST,TFCD} the notions monophase, pluriphase and the symbol ``$g$'' were introduced for the generator of a self-similar tiling satisfying the compatibility condition $\bd O \ci \attr$.   
	The reader should be aware that these terms in the present paper only match previous usage in the literature for the case when $\bd O \ci \attr$.
	In this context, the upper endpoint $g$ of the relevant interval in Def.~\ref{def:pluriphase} was defined as the \emph{inradius} $\tilde{g}$ of $G$, i.e., the maximal radius of an open metric ball contained in the set $G$.
        Moreover, the generator $G$ (as a set) was called monophase if $\lambda_d(G_{-\ge})$ is polynomial in $\ge$ for $\ge\in(0,\tilde{g})$, where $G_{-\ge}\defeq\{x\in G\suth d(x,G^c)\leq\ge\}$ and pluriphase if $\lambda_d(G_{-\ge})$ is piecewise polynomial in $\ge$ for $\ge\in(0,\tilde{g})$.
 However, Fig.~\ref{fig:examplesnew} shows an example where $g \neq \tilde{g}$ and where $G$ (as a set) is monophase but $\attr$ is not monophase w.r.t.\ $\gG$. We return to this example in Sec.~\ref{sec:examples}, Ex.~\ref{ex:Sierpinski:gasket}.
It is clear that the inradius and the notions mono- and pluriphase for sets are not the proper concept for the situation where $\bd O \not \ci F$. It is also clear from this observation that
	\eqref{eqn:g} and Def.~\ref{def:pluriphase} are the natural extensions to the present (more general) setting.
\end{remark}

\begin{remark}\label{def:monophase-examples}
	Some examples of self-similar tilings associated to familiar fractal sets are shown in Fig.~\ref{fig:examples}. In each case, there is a connected monophase generator.
In Fig.~\ref{fig:examplesnew} an alternative tiling associated with the Sierpinski gasket is provided. Here, the generator is not connected.
\end{remark}

\begin{figure}
  \centering
  \scalebox{0.48}{\includegraphics{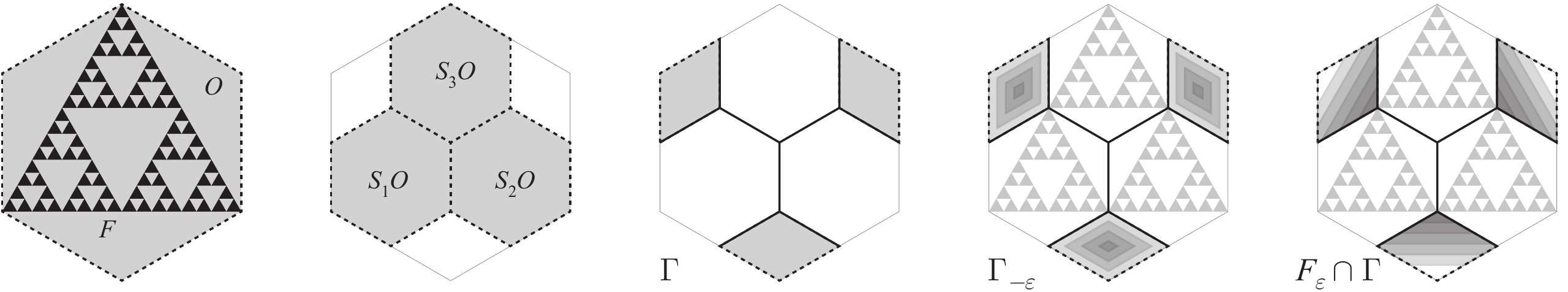}}
  \caption{\captionsize A Sierpinski gasket tiling alternative to the one from Fig.~\ref{fig:examples}. Here, $O$ is not the interior of the convex hull of $F$, but rather the \emph{central open set} discussed in \cite{BandtNguyenRao}. The set $F$ is pluriphase (but not monophase) w.r.t. $\Gamma$, while the set $\gG$ (as a set) is monophase. At right, the sets $\gG_{-\ge} \coloneqq (\bd \gG)_\ge \cap \gG$ and $F_{\ge} \cap \gG$ are shown for several values of \ge. For this example,  $g=(2\sqrt3)^{-1}$ and $\tilde{g}=1/8$; see Rem.~\ref{rmk:pluriphase:generator}. This example will be further studied in Sec.~\ref{sec:examples}, Ex.~\ref{ex:Sierpinski:gasket}.}
  \label{fig:examplesnew}
\end{figure}

\subsection{Lattice, nonlattice and the renewal theorem}

\begin{defn}\label{def:lattice}
	Consider a family of similarity mappings $S=\{\simt_1,\ldots,\simt_N\}$, and let $r_i$ denote the scaling ratio of $S_i$. The family is said to be of \emph{lattice type} iff there is an $r>0$ such that each scaling ratio $r_i$ can be written as $r_i = r^{k_i}$ for some integer $k_i$, and to be of \emph{nonlattice type} otherwise. There is a smallest number $r>0$ for which the aforementioned representation can be found. We always use this minimal $r$, and we say that $S$ is \emph{lattice with base $r$}.
\end{defn}

An extended discussion of the implications of the lattice/nonlattice dichotomy may be found in \cite[Thm.~3.6]{FGCD}. The lattice/nonlattice dichotomy also appears in probabilistic renewal theory, where the usual nomenclature is ``arithmetic/nonarithmetic''. For more details, see \cite[\S{XIII}]{Feller}, \cite[\S7]{Falconer_Techniques} or \cite[\S4]{Steffen:thesis}.

For use in the sequel, we include here a version of the renewal theorem formulated for a discrete probability distribution $\sum_{i=1}^N p_i \gd_{y_i}$, where $\gd_y$ is a point mass (Dirac measure) concentrated at $y \in \bR$. This theorem will be applied to the distribution
\linenopax
\begin{align}\label{eqn:Moran-probability}
	\sum_{i=1}^N r_i^D \gd_{y_i},
\end{align}
where $D$ is the \emph{similarity dimension} of $F$. The similarity dimension is the unique positive real number \ga that satisfies the Moran equation $r_1^\ga + r_2^\ga + \dots + r_N^\ga = 1$, i.e., the unique $D>0$ that makes \eqref{eqn:Moran-probability} into a discrete probability distribution.

\begin{theorem}[Renewal Theorem (see {\cite[Cor.~7.3]{Falconer_Techniques}} or {\cite[\S4]{Steffen:thesis}}]\label{thm:renewal}
	Let $p_1,\dots,p_N \in (0,1)$ satisfy $\sum_{i=1}^N p_i = 1$, and let $y_1, \ldots, y_N > 0$. Let $z:\bR\to\bR$ be a function with a discrete set of discontinuities which satisfies
	\linenopax
	\begin{align}\label{eqn:zbounded}
		|z(t)| \leq c_1 \ee^{-c_2|t|},
		\qq\text{for all } t \in \bR,
	\end{align}
	for some constants $0 < c_1, c_2 < \iy$. Also, let $Z:\bR\to\bR$ be the unique solution of the renewal equation
	\linenopax
	\begin{align}
		Z(t) = z(t) + \sum_{i=1}^N p_i Z(t-y_i)
	\end{align}
       which satisfies $\lim_{t \to -\iy} Z(t) = 0$. 
       Then the following holds:
	\begin{enumerate}
        \item\label{it:rt:lattice} If $\{y_1,\ldots, y_N\}\ci h\cdot\mathbb Z$ and $h>0$ is maximal as such, then
		\linenopax
		\begin{align}\label{eqn:Z(t)-sim-sum}
			Z(t) \sim \frac h\gh \sum_{\ell\in\mathbb Z} z(t-\ell h), \qquad \text{ as } t\to\infty.
		\end{align}
	\item\label{it:rt:nonlattice} If there does not exist $h>0$ such that $\{y_1,\ldots,y_N\}\ci h\cdot\mathbb Z$, then 
		\linenopax
		\begin{align}
			\lim_{t \to \iy} Z(t) = \frac1\gh \int_{-\iy}^\iy z(\gt) \dgt.
		\end{align}
	\end{enumerate}
        Here, $\gh\coloneqq\sum_{i=1}^N y_ip_i$.
	Moreover, 
        in both 
        cases, we have
	\linenopax
	\begin{align}
		\lim_{T \to \iy} \frac1T \int_0^T Z(t) \dt = \frac1\gh \int_{-\iy}^\iy z(t) \dt.
	\end{align}
\end{theorem}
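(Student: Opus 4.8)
The plan is to recognize Theorem~\ref{thm:renewal} as the classical discrete (probabilistic) renewal theorem and to reduce it to Blackwell's theorem and the key renewal theorem; the only delicate points are the identification of the distinguished solution $Z$ and the bookkeeping needed to pass from Blackwell's arithmetic statement to the periodic-sum form in \eqref{eqn:Z(t)-sim-sum}. To set up, let $\mu\coloneqq\sum_{i=1}^N p_i\gd_{y_i}$, a probability measure on $(0,\iy)$ with mean $\gh=\sum_i y_ip_i$, and let $U\coloneqq\sum_{n\geq 0}\mu^{*n}$ with $\mu^{*0}=\gd_0$ (the renewal measure). Because $\mu^{*n}$ is supported in $[n\min_i y_i,\iy)$, only finitely many terms meet any bounded interval, so $U$ is locally finite on $[0,\iy)$, and $U=\gd_0+\mu*U$.

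I would then show that $Z\coloneqq U*z$, that is $Z(t)=\sum_{n\geq 0}\sum_{|w|=n}p_w\,z(t-y_w)$ with $p_w=p_{w_1}\cdots p_{w_n}$ and $y_w=y_{w_1}+\dots+y_{w_n}$, is the unique solution. Absolute convergence and boundedness are clear: once $n\min_i y_i>t$ one has $|z(t-y_w)|\leq c_1\ee^{c_2 t}\ee^{-c_2 n\min_i y_i}$ while $\sum_{|w|=n}p_w=1$, so the series is dominated by a geometric one, and the same bound gives $Z(t)\to 0$ as $t\to-\iy$; that $Z$ solves the renewal equation is immediate from $U=\gd_0+\mu*U$. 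For uniqueness, if $Z'$ is another solution with $Z'(t)\to 0$ as $t\to-\iy$, then $D\coloneqq Z-Z'$ satisfies $D=\mu^{*n}*D$, i.e.\ $D(t)=\sum_{|w|=n}p_w D(t-y_w)$, for every $n$; given $\ge>0$, picking $M$ with $|D|<\ge$ on $(-\iy,-M)$ and then $n$ with $n\min_i y_i>t+M$ forces $|D(t)|\leq\ge$, hence $D\equiv 0$.

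Next I would invoke the asymptotics of $U*z$. In the nonlattice case the hypotheses make $z$ directly Riemann integrable --- it is bounded, its discontinuity set is discrete hence Lebesgue-null, and $\sum_{k\in\mathbb Z}\sup_{[kh,(k+1)h]}|z|<\iy$ follows from the bound $|z(t)|\leq c_1\ee^{-c_2|t|}$ --- so the key renewal theorem yields $Z(t)=(U*z)(t)\to\gh^{-1}\int_{-\iy}^\iy z(\gt)\,d\gt$. In the lattice case $\mu$ is carried by $h\mathbb Z$ with $h$ maximal, so $U$ is carried by $\{0,h,2h,\dots\}$ and Blackwell's arithmetic theorem gives $U(\{kh\})\to h/\gh$ as $k\to\iy$, with $C\coloneqq\sup_k U(\{kh\})<\iy$ by local finiteness together with this limit. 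Writing $Z(t)=\sum_{k\geq 0}U(\{kh\})z(t-kh)$, I would split the sum according to whether $|kh-t|\leq Rh$: by the exponential bound the ``far'' part is at most $C$ times a tail of the geometric series $\sum_j c_1\ee^{-c_2 jh}$, hence $<\ge$ for $R$ large, uniformly in $t$; in the remaining finite block every $kh\to\iy$ as $t\to\iy$, so $U(\{kh\})-h/\gh\to 0$ there while $z$ stays bounded, and the block tends to $\tfrac h\gh\sum_{|j|\leq R}z(t-jh)$, which differs from $\tfrac h\gh\sum_{\ell\in\mathbb Z}z(t-\ell h)$ by at most another $\ge$. Letting $t\to\iy$ and then $R\to\iy$ gives $Z(t)-\tfrac h\gh\sum_{\ell\in\mathbb Z}z(t-\ell h)\to 0$, which is \eqref{eqn:Z(t)-sim-sum}.

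Finally, the Ces\`{a}ro limit follows from the two cases just established: in the nonlattice case $Z(t)$ converges and a Ces\`{a}ro average inherits that limit; in the lattice case $Z(t)-\tfrac h\gh P(t)\to 0$ with $P(t)\coloneqq\sum_{\ell\in\mathbb Z}z(t-\ell h)$ an $h$-periodic function satisfying $\int_0^h P=\int_{-\iy}^\iy z$, so $\tfrac1T\int_0^T Z(t)\,dt\to\tfrac h\gh\cdot\tfrac1h\int_0^h P=\gh^{-1}\int_{-\iy}^\iy z$, the error contributing nothing in Ces\`{a}ro mean. I expect the main obstacle to be bookkeeping rather than conceptual: (i) confirming direct Riemann integrability of $z$ from only the discrete-discontinuity hypothesis, so that the key renewal theorem applies verbatim; and (ii) the uniform-in-$t$ tail control in the lattice case that legitimizes interchanging $t\to\iy$ with the infinite sum and recovers the honest periodic function $\sum_{\ell\in\mathbb Z}z(\cdot-\ell h)$ rather than a one-sided truncation. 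One should also note that the base $h$ of Def.~\ref{def:lattice} coincides with the span of $\mu$ entering Blackwell's theorem, since both are the positive generator of the subgroup of $\bR$ generated by $\{y_1,\dots,y_N\}$.
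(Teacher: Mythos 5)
The paper offers no proof of Thm.~\ref{thm:renewal} --- it is imported from \cite{Falconer_Techniques} and \cite{Steffen:thesis} --- and your argument (existence and uniqueness of $Z=U*z$ via the renewal measure, Blackwell's arithmetic theorem plus a uniform tail estimate in the lattice case, the key renewal theorem after verifying direct Riemann integrability in the nonlattice case) is precisely the standard probabilistic derivation underlying those references, and it is correct. The only point to flag: what your lattice-case argument actually delivers is the additive statement $Z(t)-\tfrac{h}{\eta}\sum_{\ell\in\mathbb{Z}}z(t-\ell h)\to 0$, whereas \eqref{eqn:Z(t)-sim-sum} with $\sim$ as defined in \eqref{eqn:asymptotic} is a ratio statement, which follows from the additive one only when the periodic sum is bounded away from zero; this holds in the paper's application (there $z\geq 0$ and the limiting periodic function is bounded below on a period, cf.\ \eqref{eq:asympsum}) but not for arbitrary $z$ satisfying the stated hypotheses, so the additive form you prove is the honest general content of the theorem.
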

In \eqref{eqn:Z(t)-sim-sum}, the notation $g\sim f$, as $t\to\infty$, means that $g$ is \emph{asymptotic to} $f$ as $t\to\infty$ in the sense that for any $\delta > 0$, there is a number $s=s(\delta)$ for which
\linenopax
\begin{align}\label{eqn:asymptotic}
	(1-\delta) f(t) \leq g(t) \leq (1+\delta)f(t),
	\qq\text{for all } t \geq s.
\end{align}
\begin{remark}
    If the self-similar system $\{S_1,\ldots,S_N\}$ is lattice, then there exist $r>0$ and $k_i\in\mathbb N$ such that $r_i=r^{k_i}$, where $r_i$ denotes the scaling ratio of $S_i$ for $i=1,\ldots,N$. In this case $\{-\ln r_1,\ldots,-\ln r_N\}=\{-k_1\ln r,\ldots,-k_N\ln r\} \ci -\ln r\cdot\mathbb Z$ and thus, we are in case \ref{it:rt:lattice} of the renewal theorem. On the other hand, if $\{S_1,\ldots,S_N\}$ is nonlattice, then we are in case \ref{it:rt:nonlattice} of the renewal theorem.
\end{remark}

\begin{remark}[A brief dictionary]
	The renewal theorem above is given in terms of the additive variable $t \in \bR$ but will be applied in the context of the multiplicative variable $\ge \in (0,g]$. For the reader's convenience, we offer the following translation of symbols corresponding to the change of variables $\ge = \ee^{-t}$: \\
	\begin{center}
	\begin{tabular}{c|c|c|c|c|c|c}
		$\ee^{-t}$ & $t \to \iy$  & $-\ln g \leq t < \iy$ & $\ee^{-h}$ & $t - \ell h$     &  $\frac1h (t-\ln g)$  & $p_i$\\ \hline
		\ge        & $\ge \to 0$  & $0 < \ge \leq g$      & $r$        & $-\ln(r^{-\ell}\ge)$  & $-\log_r (g^{-1}\ge)$ & $r_i^D$
	\end{tabular}
	\end{center}
\end{remark}



\section{Statement and proof of the main results}\label{sec:results}


In order to prove Thm.~\ref{thm:main-result}, we first prove a theorem which provides information on the asymptotic behavior of the parallel volume of the self-similar set $F\subseteq\mathbb R^d$ under weaker conditions. This intermediate result is of independent interest as it does not require the pluriphase condition to be satisfied, and thus may provide an avenue for eventually removing this hypothesis. 
In analogy to \eqref{eqn:asymptotic} we say that $f\sim g$ as $\ge\to 0$ iff $f\circ h\sim g\circ h$ as $t\to\infty$, where $h(t)\defeq \exp(-t)$. If $f,g:(0,\infty)\to(0,\infty)$ this is equivalent to assuming that $\lim_{\ge \to 0} f(\ge)/g(\ge) = 1$.

\begin{theorem}\label{thm:subresult}
	 Let $F\subset\bR^d$ be the attractor of a self-similar system $S=\simtset$ satisfying the OSC and let $r_i$ denote the contraction ratio of $S_i$ for $i=1,\ldots,N$. Assume $F$ is nontrivial (i.e.~$D\defeq\textup{dim}_{\mathcal M}(\attr)<d$). Let $O$ be an arbitrary strong feasible set satisfying the projection condition, $\gG\defeq O\setminus \mathbf{S}O$ and $g$ as in \eqref{eqn:g}.
	 If $S$ is lattice with base $r$ 
then
  \linenopax
  \begin{align}\label{eq:asympeins}
  	\ge^{D-d}\gl_d(F_{\ge})
  	&\sim \frac{\ln r}{\sum_{i=1}^N r_i^D\ln r_i} p(\ge),
	\qq\text{as } \ge \to 0,
  \end{align}
  where $p:(0,g] \to \bR$ is defined by
  \linenopax
  \begin{align}\label{eqn:p(e)}
    	p(\ge) \coloneqq
        \ge^{D-d}\sum_{\ell \in\mathbb Z} r^{\ell(D-d)} \gl_d(F_{r^{\ell}\ge} \cap \gG),
	\qq\text{for } \ge > 0. 
  \end{align}
  Moreover, for $\ge \in (rg,g]$, $p$ has the alternative representation
  \linenopax
  \begin{align}\label{eq:asympsum}
  	p(\ge)
  	= \ge^{D-d} \left[ \frac{\gl_d(\gG)}{r^{D-d}-1} + \sum_{\ell = 0}^{\infty} r^{\ell(D-d)} \gl_d(F_{r^{\ell}\ge}\cap \gG) \right].
  \end{align}
\end{theorem}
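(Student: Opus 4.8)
The plan is to derive a renewal equation for a suitably normalized version of $\lambda_d(F_\varepsilon)$ and then invoke Theorem~\ref{thm:renewal}\ref{it:rt:lattice} after the change of variables $\varepsilon = \ee^{-t}$, $r = \ee^{-h}$, $p_i = r_i^D$. Starting from the decomposition \eqref{eq:Tdecomp}, the projection condition gives $\lambda_d(F_\varepsilon \cap S_iO) = \lambda_d((S_iF)_\varepsilon \cap S_iO)$ by \eqref{eq:proj}, and since $S_i$ is a similarity with ratio $r_i$ one has $\lambda_d((S_iF)_\varepsilon \cap S_iO) = r_i^d \lambda_d(F_{\varepsilon/r_i} \cap O)$. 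Choosing $\varepsilon$ small enough that $F_\varepsilon \subseteq O$ (possible since $O$ is strong feasible, hence $F \subseteq \overline O$, and one handles the boundary/large-$\varepsilon$ terms separately) makes the term $\lambda_d(F_\varepsilon \setminus O)$ vanish and turns $\lambda_d(F_{\varepsilon/r_i}\cap O)$ into $\lambda_d(F_{\varepsilon/r_i})$ for $\varepsilon$ small. Thus for small $\varepsilon$,
\begin{align*}
	\lambda_d(F_\varepsilon) = \sum_{i=1}^N r_i^d \lambda_d(F_{\varepsilon/r_i}) + \lambda_d(F_\varepsilon \cap \Gamma).
\end{align*}
Setting $Z(t) \defeq \ee^{t(D-d)}\lambda_d(F_{\ee^{-t}})$ and $z(t) \defeq \ee^{t(D-d)}\lambda_d(F_{\ee^{-t}}\cap\Gamma)$, and using $r_i^d(\varepsilon/r_i)^{D-d} = r_i^D \varepsilon^{D-d}$, this becomes $Z(t) = z(t) + \sum_{i=1}^N r_i^D Z(t - \ln r_i^{-1})$, i.e. the renewal equation with $y_i = -\ln r_i > 0$ and $p_i = r_i^D$ (which sum to $1$ by the Moran equation, since $D$ equals the similarity dimension under OSC).

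**Verifying the hypotheses of the renewal theorem.** Next I would check the technical conditions of Theorem~\ref{thm:renewal}: that $z$ has a discrete discontinuity set and satisfies the exponential bound \eqref{eqn:zbounded}. The discontinuity set of $z$ comes from that of $\varepsilon \mapsto \lambda_d(F_\varepsilon \cap \Gamma)$, which is at worst a discrete set (monotone-type behaviour of parallel volumes plus the structure of $\Gamma$; in the pluriphase case it is literally the finite set $\{-\ln a_m\}$, but one does not need pluriphase here — just needs the discontinuities to be isolated, which holds because $\lambda_d(F_\varepsilon\cap\Gamma)$ is monotone nondecreasing, hence has at most countably many jumps, and locally finitely many after the change of variables on compact $t$-intervals). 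For the exponential bound: for $t$ large (i.e.\ $\varepsilon$ small), $\lambda_d(F_\varepsilon\cap\Gamma) = O(\varepsilon^{d-D})$ because $\Gamma$ is at positive distance issues aside, $F\cap\overline\Gamma$ is contained in $F$ and the parallel volume of $F$ near $F$ scales like $\varepsilon^{d-D}$ up to constants (upper Minkowski content of $F$ is finite — standard for self-similar sets under OSC, cf.\ Falconer); hence $z(t) = O(1)$... more care is needed: one actually wants $z(t) \to 0$. Since $\Gamma$ has positive distance from $F$ only on part of itself, but $\overline\Gamma \cap F$ is a proper closed subset of $F$ and its parallel volume contributes, one gets $|z(t)| \le c_1 \ee^{-c_2|t|}$ by combining (i) for $t \to +\infty$, $\lambda_d(F_\varepsilon\cap\Gamma) \le \lambda_d((F\cap\overline\Gamma)_\varepsilon)$ and $\dim_{\mathcal M}(F\cap\overline\Gamma) < D$ or the relevant content bound giving decay of $\varepsilon^{D-d}\lambda_d(\cdots)$; (ii) for $t \to -\infty$, $\varepsilon \to \infty$, $\lambda_d(F_\varepsilon\cap\Gamma) \le \lambda_d(\Gamma) < \infty$ and $\varepsilon^{D-d} \to 0$ since $D < d$. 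The boundary-term bookkeeping — that $z$ differs from the "true" increment only on a bounded $t$-range where everything is bounded — can be absorbed into $z$ without affecting the exponential bound.

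**Extracting the conclusion.** With the hypotheses verified, Theorem~\ref{thm:renewal}\ref{it:rt:lattice} with $h = -\ln r$, $\eta = \sum y_ip_i = -\sum_{i=1}^N r_i^D \ln r_i$ gives
\begin{align*}
	Z(t) \sim \frac{h}{\eta}\sum_{\ell\in\mathbb Z} z(t - \ell h) = \frac{-\ln r}{\sum_i r_i^D\ln r_i}\sum_{\ell\in\mathbb Z} z(t-\ell h), \qquad t \to \infty.
\end{align*}
Translating back via $\varepsilon = \ee^{-t}$, $\ee^{-\ell h} = r^\ell$: $Z(t) = \varepsilon^{D-d}\lambda_d(F_\varepsilon)$ and $z(t - \ell h) = (r^\ell\varepsilon)^{D-d}\lambda_d(F_{r^\ell\varepsilon}\cap\Gamma) = r^{\ell(D-d)}\varepsilon^{D-d}\lambda_d(F_{r^\ell\varepsilon}\cap\Gamma)$, so $\sum_\ell z(t-\ell h) = p(\varepsilon)$ exactly as in \eqref{eqn:p(e)}, after noting the sign: $\ln r < 0$ and $\sum r_i^D \ln r_i < 0$, so the prefactor $\frac{\ln r}{\sum r_i^D\ln r_i}$ is positive, matching \eqref{eq:asympeins}. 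This proves \eqref{eq:asympeins}. For the alternative representation \eqref{eq:asympsum} when $\varepsilon \in (rg, g]$: split the sum in \eqref{eqn:p(e)} at $\ell = 0$. For $\ell \le -1$ we have $r^\ell\varepsilon > r^{-1}\cdot rg = g$, so $F_{r^\ell\varepsilon}\cap\Gamma = \Gamma$ (by the definition of $g$ in \eqref{eqn:g}, $F_\delta \supseteq \Gamma$ for $\delta > g$, hence $F_\delta\cap\Gamma = \Gamma$), giving $\sum_{\ell\le -1} r^{\ell(D-d)}\lambda_d(\Gamma) = \lambda_d(\Gamma)\sum_{\ell=1}^\infty r^{-\ell(D-d)} = \lambda_d(\Gamma)\sum_{\ell=1}^\infty (r^{D-d})^{-\ell}$; since $D < d$ gives $r^{D-d} > 1$, this geometric series converges to $\lambda_d(\Gamma)/(r^{D-d}-1)$. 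The remaining terms $\ell \ge 0$ are left as written, yielding \eqref{eq:asympsum}.

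**Main obstacle.** The genuinely delicate step is not the renewal algebra but establishing the exponential decay bound \eqref{eqn:zbounded} for $z$ uniformly and justifying the reduction to small $\varepsilon$ (the replacement of $\lambda_d(F_{\varepsilon/r_i}\cap O)$ by $\lambda_d(F_{\varepsilon/r_i})$ and the disappearance of $\lambda_d(F_\varepsilon\setminus O)$ require $\varepsilon$ below a threshold, after which one must check that modifying $z$ on the complementary bounded range keeps the bound and does not disturb the tail asymptotics) — together with confirming that the discontinuity set of $\varepsilon\mapsto\lambda_d(F_\varepsilon\cap\Gamma)$ is discrete. Each of these is standard for self-similar sets under OSC, but they are where the real work lies.
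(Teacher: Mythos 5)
Your overall strategy (exponential change of variables, projection condition plus the decomposition \eqref{eq:Tdecomp}, renewal theorem in the lattice case) is the same as the paper's, and your derivation of \eqref{eq:asympsum} from \eqref{eqn:p(e)} by splitting the sum at $\ell=0$ is exactly right. But your setup of the renewal equation has a genuine gap. First, the claim that one can choose $\ge$ small enough that $F_\ge \ci O$ is false in general: $F$ typically meets $\bd O$ (for the Sierpinski gasket with $O$ the interior of the convex hull one even has $\bd O \ci F$), so $\gl_d(F_\ge \setminus O) > 0$ for \emph{every} $\ge > 0$. That term never vanishes; it is only asymptotically negligible, and proving $\ge^{D-d}\gl_d(F_\ge\setminus O)\to 0$ requires the SOSC-based estimate $\gl_d(F_\ge \cap (\simS O^c)_\ge) \le c\ge^{d-D+\gamma}$ of \eqref{eq:FeoutsideSO} --- this is precisely where strong feasibility of $O$ enters, whereas your appeal to a Minkowski-dimension count for $F\cap\cj{\gG}$ is neither justified nor the right statement. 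Second, and more seriously, your renewal function $Z(t) = \ge^{D-d}\gl_d(F_{\ge})$ (with $\ge=\ee^{-t}$) violates the boundary condition $\lim_{t\to-\iy}Z(t)=0$ of Thm.~\ref{thm:renewal}: as $\ge\to\iy$ one has $\gl_d(F_\ge)\asymp\ge^{d}$, so $Z(t)\asymp\ee^{-tD}\to\iy$. A cutoff is unavoidable, and your assertion that the discrepancy between your $z$ and the true increment $Z(t)-\sum_i p_i Z(t-y_i)$ lives on ``a bounded $t$-range'' is false: the terms $\ge^{D-d}\bigl[\gl_d(F_\ge\setminus O) - \sum_i r_i^d\gl_d(F_{\ge/r_i}\setminus O)\bigr]$ are present for all $t$, and the correction created by any cutoff is supported on an unbounded half-line. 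Absorbing these into $z$ changes the periodic limit function $\sum_\ell z(t-\ell h)$, and one cannot argue that the corrections cancel by reindexing the sum, because the relevant series is not absolutely convergent near $t=-\iy$.

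The paper's resolution is to apply the renewal theorem to $\tilde Z(\ge) = \ge^{D-d}\one_{(0,g]}(\ge)\,\gl_d(F_\ge\cap O)$ rather than to $\ge^{D-d}\gl_d(F_\ge)$: restricting to $O$ removes the $F_\ge\setminus O$ contributions from the renewal increment entirely (they are handled once, at the end, via \eqref{eq:outsideO}), and cutting off at $\ge=g$ is exactly the right threshold because $F_\ge\cap O=O$ for $\ge\ge g$, so the extra term generated by the cutoff is explicitly $\ge^{D-d}\gl_d(O)\sum_i r_i^d\one_{(r_ig,g]}(\ge)$. That term is not an error to be discarded: after summing over $\ell$ and using Moran's equation it produces precisely the summand $\gl_d(\gG)/(r^{D-d}-1)$ in \eqref{eq:asympsum}. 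Your proposal recovers that summand only a posteriori from \eqref{eqn:p(e)}, but \eqref{eqn:p(e)} is exactly what your renewal setup fails to establish. Finally, the exponential bound on the $\gG$-part of $z$ follows cleanly from $\gG\ci\simS O^c$ together with the same estimate \eqref{eq:FeoutsideSO}, which also settles the discontinuity issue since $z$ then has only finitely many jumps.
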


If one can show that the periodic function $p$ is non-constant, then Thm.~\ref{thm:subresult} implies that $\limsup_{\ge\to 0}\ge^{D-d}\gl_d(F_{\ge})> \liminf_{\ge\to 0}\ge^{D-d}\gl_d(F_{\ge})$ and hence that $F$ is not Minkowski measurable. On the other hand, if the function $p$ is constant, then \eqref{eq:asympeins} implies immediately that $F$ is Minkowski measurable. Note that in both cases $p$ is a strictly positive function. This is for instance obvious from \eqref{eq:asympsum} since the first term is strictly positive and all terms in the second summation are non-negative.
We save these important observations for later use:

\begin{cor} \label{cor:subresult}
  Under the hypothesis of Thm.~\ref{thm:subresult}, the self-similar set $F$ is Minkowski measurable if and only if the function $p$ (given by \eqref{eqn:p(e)} or \eqref{eq:asympsum}) is constant, that is, $p(\ge)=C$ for some constant $C>0$ and all $\ge>0$.
Moreover, in this case the $D$-dimensional Minkowski content of $F$ is given by
\linenopax
\[\sM_D(F)=\frac{\ln r}{\sum_{i=1}^N r_i^D\ln r_i}\cdot C.\]
\end{cor}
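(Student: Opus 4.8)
The plan is to derive a renewal equation for a suitably renormalized version of the parallel volume and then apply the Renewal Theorem (Thm.~\ref{thm:renewal}) in the lattice case \ref{it:rt:lattice}. First I would use the decomposition \eqref{eq:Tdecomp} together with the projection condition in the form \eqref{eq:proj}: since $F_\ge\cap S_iO=(S_iF)_\ge\cap S_iO$ and $S_iF$ is a scaled copy of $F$ by the ratio $r_i$, scaling gives $\gl_d(F_\ge\cap S_iO)=r_i^d\,\gl_d(F_{\ge/r_i}\cap O)$, at least for $\ge$ small enough that the complement term $\gl_d(F_\ge\setminus O)$ and the overlap with $\bd S_iO$ are controlled. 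Substituting back into \eqref{eq:Tdecomp} produces a self-referential identity of the form $\gl_d(F_\ge)=\sum_{i=1}^N r_i^d\gl_d(F_{\ge/r_i}\cap O)+\gl_d(F_\ge\cap\gG)+(\text{error})$, and one similarly unfolds $\gl_d(F_{\ge/r_i}\cap O)$ using $O=\gG\cup\mathbf SO$; I expect the cleanest route is to write the renewal equation directly for the quantity $\ge\mapsto\ge^{D-d}\gl_d(F_\ge)$ (or a truncation of it to $(0,g]$) so that the $r_i^d$ factors combine with $(\ge/r_i)^{D-d}$ to yield the Moran weights $r_i^D$, which by definition of the similarity dimension $D$ sum to $1$.

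Next I would pass to the additive variable via $\ge=\ee^{-t}$, setting $Z(t):=\ee^{-t(D-d)}\gl_d(F_{\ee^{-t}})$ (appropriately modified so it vanishes as $t\to-\iy$, i.e. as $\ge\to\iy$, which holds because $\gl_d(F_\ge)$ stabilizes and $D<d$), $y_i:=-\ln r_i$, and $p_i:=r_i^D$. The residual term, which in the $\ge$-variable is essentially $\ge^{D-d}\gl_d(F_\ge\cap\gG)$ plus boundary/complement corrections supported on a bounded $\ge$-range, becomes the source function $z(t)$; I must check it has only a discrete set of discontinuities and satisfies the exponential bound \eqref{eqn:zbounded} — the decay as $t\to+\iy$ (i.e. $\ge\to0$) comes from $\gl_d(F_\ge\cap\gG)\le\gl_d(F_\ge)\le c\,\ge^{d-D}$ (Minkowski content finiteness / $d-D>0$) wait, more carefully from $\gl_d(F_\ge\cap\gG)=O(\ge)$ or from the fact that $F_\ge\cap\gG$ only contributes for $\ge\le g$ after correcting, and the decay as $t\to-\iy$ from the eventual vanishing of the relevant parallel volume. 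Then Thm.~\ref{thm:renewal}\ref{it:rt:lattice} with $h=-\ln r$ (the maximal lattice span of $\{-\ln r_1,\dots,-\ln r_N\}\ci-\ln r\cdot\bZ$, using that $r$ is the minimal base) gives $Z(t)\sim\frac{h}{\gh}\sum_{\ell\in\bZ}z(t-\ell h)$ with $\gh=\sum y_ip_i=-\sum r_i^D\ln r_i$. Translating back through the dictionary $\ee^{-t}=\ge$, $\ee^{-h}=r$, $t-\ell h\leftrightarrow -\ln(r^{-\ell}\ge)$ turns this into exactly \eqref{eq:asympeins}–\eqref{eqn:p(e)}, with the prefactor $\frac{h}{\gh}=\frac{-\ln r}{-\sum r_i^D\ln r_i}=\frac{\ln r}{\sum r_i^D\ln r_i}$ and $p(\ge)=\ge^{D-d}\sum_{\ell\in\bZ}r^{\ell(D-d)}\gl_d(F_{r^\ell\ge}\cap\gG)$ (the $\ge^{D-d}$ and $r^{\ell(D-d)}$ factors being the images of the $\ee^{-t(D-d)}$ normalization I stripped out of $z$).

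For the alternative representation \eqref{eq:asympsum} I would split the bilateral sum in \eqref{eqn:p(e)} at $\ell=0$. For $\ell\ge0$ and $\ge\le g$ one has $r^\ell\ge\le g$, so those terms are kept as is; for $\ell<0$ one has $r^\ell\ge>g$ once $\ell$ is negative enough (precisely when $\ge\in(rg,g]$, already $r^{-1}\ge>g$), and then $\gl_d(F_{r^\ell\ge}\cap\gG)=\gl_d(\gG)$ by the pluriphase form \eqref{eqn:pluriphase} (or simply because $\gG\ci F_\rho$ for $\rho\ge g$). Hence $\sum_{\ell<0}r^{\ell(D-d)}\gl_d(\gG)=\gl_d(\gG)\sum_{\ell\ge1}r^{-\ell(D-d)}=\gl_d(\gG)\cdot\frac{r^{-(D-d)}}{1-r^{-(D-d)}}=\frac{\gl_d(\gG)}{r^{D-d}-1}$ (geometric series, convergent since $D-d<0$ makes $r^{-(D-d)}=r^{d-D}\in(0,1)$), which is exactly the stated first term; multiplying through by $\ge^{D-d}$ gives \eqref{eq:asympsum}. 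Finally, Cor.~\ref{cor:subresult} is immediate: $p$ is continuous in $\ge$ on $(rg,g]$ and multiplicatively $r$-periodic by construction (replacing $\ge$ by $r\ge$ reindexes the sum in \eqref{eqn:p(e)}), so $\ge^{D-d}\gl_d(F_\ge)$ has a genuine limit as $\ge\to0$ iff $p$ is constant; strict positivity of $p$ is visible from \eqref{eq:asympsum} as noted in the text, so "constant" forces $p\equiv C>0$, whence $\sM_D(F)=\lim\ge^{D-d}\gl_d(F_\ge)=\frac{\ln r}{\sum r_i^D\ln r_i}C\in(0,\iy)$, i.e. Minkowski measurability; conversely if $p$ is non-constant then $\limsup$ and $\liminf$ of $\ge^{D-d}\gl_d(F_\ge)$ differ and $F$ fails to be Minkowski measurable.

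The main obstacle I anticipate is making the renewal equation \emph{exact} rather than merely asymptotic: the naive self-similar decomposition has error terms — the complement piece $\gl_d(F_\ge\setminus O)$, the discrepancy between $\gG$ and the open generator $G$ living in $\bigcup_i\bd S_iO$, and the fact that \eqref{eq:proj} and the scaling of $F_\ge\cap S_iO$ are only clean for small $\ge$ — and one must either absorb all of these into the source term $z$ while preserving its discrete-discontinuity and exponential-decay properties, or argue that they affect only finitely many scales and hence do not change the asymptotics. This is precisely where the SOSC, the projection condition, and the cited results of \cite{MinkContGenForm} (especially \cite[Lem.~3.19]{MinkContGenForm} giving \eqref{eq:proj}) do the real work; verifying the hypotheses of Thm.~\ref{thm:renewal} for the resulting $z$ is the technical heart of the argument, while everything downstream is bookkeeping with geometric series and the change of variables $\ge=\ee^{-t}$.
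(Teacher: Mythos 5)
Your proposal is correct and follows essentially the same route as the paper: the corollary itself is deduced, exactly as in the paragraph preceding its statement, from the asymptotic relation \eqref{eq:asympeins} together with the multiplicative $r$-periodicity and strict positivity of $p$ (constant $p$ yields an existing positive finite limit, hence $\sM_D(F)=\frac{\ln r}{\sum_{i=1}^N r_i^D\ln r_i}\cdot C$; non-constant $p$ forces $\limsup>\liminf$). Your supporting re-derivation of Thm.~\ref{thm:subresult} (the decomposition \eqref{eq:Tdecomp}, the projection/scaling identity \eqref{eq:proj}, the renewal equation in $t=-\ln\ge$ with weights $r_i^D$, and the geometric-series computation giving \eqref{eq:asympsum}) likewise mirrors the paper's own proof of that theorem.
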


Note that Thm.~\ref{thm:subresult} and Cor.~\ref{cor:subresult} apply to all nontrivial self-similar sets satisfying OSC. There is no monophase or pluriphase condition present and the projection condition on its own does not impose any restrictions. There exists always a strong feasible set $O$ for which it is satisfied, see Rem.~\ref{rmk:centralopen}. Only trivial self-similar sets are excluded. In this case, the statement of Thm.~\ref{thm:subresult} does not make sense, since $\Gamma=\emptyset$ and thus $p\equiv 0$. But such sets are always Minkowski measurable and there is no need for a statement like this. (Note that for a trivial self-similar set $F\subset\bR^d$, the $d$-dimensinoal Minkowski content is given by $\sM_d(F)=\lambda_d(F)$.)

\begin{proof}[Proof of Thm.~\ref{thm:subresult}]
	We decompose the parallel volume of $F$ through
  \linenopax
		\begin{equation}\label{eq:decompFe}
			\gl_d(F_{\ge})
			= \gl_d(F_{\ge}\setminus O)+\gl_d(F_{\ge}\cap O).
		\end{equation}
	For the first summand on the right hand side of \eqref{eq:decompFe}, we note that $\simS O\ci O$ so that $O^c\ci\simS O^c\ci(\simS O^c)_{\ge}$. By \cite[Cor.~5.6.3]{Steffen:thesis} we know that there exist $c,\gamma>0$ such that
	\linenopax
	\begin{equation}\label{eq:FeoutsideSO}
	\gl_d(F_{\ge}\cap(\simS O^c)_{\ge})\leq c\ge^{d-D+\gamma},
	\qq\text{for } \ge\in(0,1).
	\end{equation}
	Note that it is this estimate which requires the hypothesis that $O$ is a strong feasible set.
	Equation \eqref{eq:FeoutsideSO} implies $\gl_d(F_{\ge}\setminus O)\leq c\ge^{d-D+\gamma}$, whence
	\linenopax
	\begin{equation}\label{eq:outsideO}
		\lim_{\ge\to 0}\ge^{D-d}\gl_d(F_{\ge}\setminus O)=0.
	\end{equation}
	Now we turn to the more interesting second summand on the right hand side of \eqref{eq:decompFe}.
	From \eqref{eq:Tdecomp}, we have
	\linenopax
	\begin{equation}\label{eq:voldecomposed}
		\gl_d(F_{\ge}\cap O)
		= \sum_{i=1}^N\gl_d(F_{\ge}\cap S_i O) + \gl_d(F_{\ge}\cap \gG),
		\qq\text{for } \ge>0.
	\end{equation}
	We deduce from \eqref{eq:proj} that
	\linenopax
	\begin{equation}\label{eq:scaling}
		\gl_d(F_{\ge}\cap S_i O)
		= \gl_d((S_iF)_{\ge}\cap S_i O)
		= r_i^d\cdot\gl_d(F_{\ge/r_i}\cap O).
	\end{equation}
	Note that it is \eqref{eq:proj} which uses the hypothesis concerning the projection condition.
	We multiply \eqref{eq:voldecomposed} by $\ge^{D-d}\one_{(0,g]}(\ge)$ to obtain
	\linenopax
	\begin{align*}
		\ge^{D-d}\one_{(0,g]}(\ge)\gl_d(F_{\ge}\cap O)
		&=\sum_{i=1}^N r_i^D\left(\ge/r_i\right)^{D-d}\one_{(0,g]}\left(\ge/r_i\right)\gl_d(F_{\ge/r_i}\cap O)\\
		&\quad +\sum_{i=1}^N r_i^d\ge^{D-d}\one_{(r_ig,g]}(\ge)\gl_d(F_{\ge/r_i}\cap O)
		+\ge^{D-d}\one_{(0,g]}(\ge)\gl_d(F_{\ge}\cap \gG).
	\end{align*}
	Setting
	\linenopax
	\begin{align}\label{eqn:tildeZ}
	\tilde{Z}(\ge)\defeq \ge^{D-d}\one_{(0,g]}(\ge)\gl_d(F_{\ge}\cap O),
	\end{align}
	the previous equation can be rewritten as
	\linenopax
	\begin{align*}
		\tilde{Z}(\ge)
		=\sum_{i=1}^N r_i^D\tilde{Z}(\ge/r_i) + \tilde{z}(\ge)
	\end{align*}
	where we have introduced
	\linenopax
	\begin{align}\label{eqn:z(e)}
		\tilde{z}(\ge)
		\coloneqq \sum_{i=1}^N r_i^d\ge^{D-d}\one_{(r_ig,g]}(\ge)\gl_d(F_{\ge/r_i}\cap O)
		+ \ge^{D-d}\one_{(0,g]}(\ge)\gl_d(F_{\ge}\cap \gG)
	\end{align}
	The definition of $g$ yields $\gG \ci F_g$, which implies for all $w\in \Words$ that $S_w \gG \ci S_w(F_g) \ci (S_w F)_g \ci F_g$. Consequently, \eqref{eq:Tunion} yields $O \ci \cj{O} \ci F_g$ and we have $F_{\ge} \cap O = O$ for any $\ge \geq g$. Since $\ge/r_i>g$ for $\ge\in(r_i g,g]$, we can thus reduce $\tilde{z}(\ge)$ to
	\linenopax
	\begin{equation}\label{eq:z}
		\tilde{z}(\ge)
		= \ge^{D-d}\cdot\left(\gl_d(O)\sum_{i=1}^N r_i^d\one_{(r_ig,g]}(\ge)
			+ \one_{(0,g]}(\ge)\gl_d(F_{\ge}\cap \gG)\right).
	\end{equation}
	In order to be able to apply the renewal theorem (Thm.~\ref{thm:renewal}), we make the variable transformation $\ge=\ee^{-t}$ with $t\in\mathbb R$ and write $Z(t)\defeq \tilde{Z}(\ee^{-t})$ and  $z(t)\defeq \tilde{z}(\ee^{-t})$. This gives
	\linenopax
	\begin{equation}\label{eq:renewalZ}
		Z(t) = \sum_{i=1}^N r_i^D Z(t+\ln r_i) +z(t).
	\end{equation}
	Since $\gG \ci \simS O^c$, the function $z$ satisfies \eqref{eqn:zbounded} by \eqref{eq:FeoutsideSO}. Moreover, $z$ clearly has a finite set of discontinuities.
	A consequence of $\one_{(0,g]}(\ge)$ being a factor of $\tilde{Z}(\ge)$ and thus $\one_{[-\ln g,\infty)}(t)$ being a factor of $Z(t)$ is that $\lim_{t\to-\infty} Z(t)=0$ holds true. 
	By Moran's equation, $\sum_{i=1}^N r_i^D=1$. Thus, Thm.~\ref{thm:renewal} is applicable to $Z$ with $p_i\defeq r_i^D$ and $y_i\defeq-\ln r_i$ for $i=1,\ldots,N$.
	Since the lattice condition gives $r_i=r^{k_i}$ for $i=1,\ldots,N$, we have $\{y_1,\ldots,y_N\}=\{-k_1\ln r,\ldots,-k_N\ln r\}\ci-\ln r\cdot\mathbb Z$. Therefore, Thm.~\ref{thm:renewal}\ref{it:rt:lattice} yields
	\linenopax
	\begin{equation*}
		Z(t)\sim\frac{\ln r}{\sum_{i=1}^N r_i^D\ln r_i}\sum_{\ell\in\mathbb Z} z(t - \ell h)\q\text{as}\ t\to\infty.
	\end{equation*}
	In $\ge$-notation, this is
	\linenopax
	\begin{equation}\label{eq:Zasymp}
		\tilde{Z}(\ge)\sim\frac{\ln r}{\sum_{i=1}^N r_i^D\ln r_i}\sum_{\ell\in\mathbb Z} \tilde{z}(\ge\cdot r^{-\ell})\q \text{as}\ \ge\to 0.
	\end{equation}
	Noting that the above sum is absolutely convergent (all terms are positive) and using \eqref{eq:z}, we have
	\linenopax
	\begin{align}
	\widetilde p(\ge)&\defeq\sum_{\ell\in\mathbb Z} \tilde{z}(\ge\cdot r^{-\ell})\nonumber
	=\ge^{D-d}\left[\gl_d(O)\sum_{i=1}^N r_i^d\sum_{\ell\in\mathbb Z} r^{-\ell(D-d)}\one_{(r_ig,g]}(r^{-\ell} \ge)\right. \nonumber \\ &\hstr[25] \left.
		+\sum_{\ell\in\mathbb Z}r^{-\ell(D-d)}\one_{(0,g]}(r^{-\ell} \ge)\gl_d(F_{r^{-\ell} \ge}\cap \gG)\right].\label{eq:sumz}
	\end{align}	
Define
\linenopax
\begin{align}
  L(\ge) \defeq \left\lfloor\log_r \tfrac{\ge}{g}\right\rfloor
  \q\text{and}\q
  L^i(\ge) \defeq \left\lfloor\log_r \tfrac{\ge}{g}-k_i\right\rfloor = L(\ge) - k_i,
\end{align}
where $\lfloor x\rfloor$ denotes the floor function of $x\in\bR$, so that $x \mapsto \lfloor x\rfloor+1$ rounds $x$ up to the nearest integer strictly larger than $x$. Upon noting that $r_i g < r^{-\ell}\ge \leq g$ iff $\log_r \frac{\ge}{r_i g} < \ell \leq \log_r \frac{\ge}{g}$ iff $ L^i(\ge)+1\leq \ell\leq L(\ge)$ for $\ell \in\mathbb Z$, we see that \eqref{eq:sumz} becomes
\linenopax
\begin{align}\label{eqn:pLseries}
  \widetilde p(\ge)
  &= \ge^{D-d} \left[ \gl_d(O) \sum_{i=1}^N r_i^d \sum_{\ell = L^i(\ge)+1}^{L(\ge)} r^{-\ell(D-d)} + \sum_{\ell = -\iy}^{L(\ge)} r^{-\ell(D-d)} \gl_d(F_{r^{-\ell}\ge}\cap \gG) \right].
\end{align}
However, we know that $\tilde{p}$ is by definition multiplicatively periodic with multiplicative period $r$, so it suffices to work with $\ge \in (rg,g]$, in which case $L(\ge) = 0$ and $L^i(\ge) = -k_i$.
Using the geometric series, the Moran equation $\sum_{i=1}^N r_i^D=1$ and that $r_i=r^{k_i}$, especially the first summand in \eqref{eqn:pLseries} simplifies significantly and we obtain
	\linenopax
\begin{align}\label{eq:p2}
  \widetilde p(\ge)
  &= \ge^{D-d} \left[ \frac{\gl_d(O)}{1-r^{D-d}}\left (\sum_{i=1}^N r_i^d-1\right) + \sum_{\ell = 0}^{\infty} r^{\ell(D-d)} \gl_d(F_{r^{\ell}\ge}\cap \gG) \right],
  \qq\text{for } \ge \in (rg,g].
\end{align}
Note that
	\linenopax
\begin{align*}
  \gl_d(\gG)
    =\gl_d\left(O\setminus \bigcup_{i=1}^N S_iO\right)
    =\gl_d(O)-\sum_{i=1}^N\gl_d(S_i O)
    =\gl_d(O)\left(1-\sum_{i=1}^N r_i^d\right).
\end{align*}
Therefore, \eqref{eq:p2} becomes
	\linenopax
\begin{align}\label{eq:p3}
 \widetilde p(\ge)
  &= \ge^{D-d} \left[ \frac{\gl_d(\gG)}{r^{D-d}-1} + \sum_{\ell = 0}^{\infty} r^{\ell(D-d)} \gl_d(F_{r^{\ell}\ge}\cap \gG) \right],
  \qq\text{for } \ge \in (rg,g], 
\end{align}
which shows that the asymptotic relation \eqref{eq:asympeins} in Thm.~\ref{thm:subresult} holds indeed with the (periodic continuation of the) function $p$ given by \eqref{eq:asympsum}.
Using that $\gl_d(F_{r^{\ell}\ge}\cap \gG)=\gl_d(\gG)$ for $\ge\in(rg,g]$ and $\ell\leq-1$ we obtain from the geometric series expansion that
	\linenopax
\begin{align}\label{eq:p4}
  \widetilde p(\ge)
  &=\ge^{D-d} \sum_{\ell\in\mathbb Z} r^{\ell(D-d)} \gl_d(F_{r^{\ell}\ge}\cap \gG),
  \qq\text{for } \ge \in (rg,g].
\end{align}
Thus $\widetilde p(\ge)=p(\ge)$ for $\ge\in (rg,g]$, where $p$ is as defined in \eqref{eqn:p(e)}, and the periodicity of $p$ implies now that the representation \eqref{eqn:p(e)} (resp. \eqref{eq:p4}) is in fact valid for all $\ge>0$.
This shows that \eqref{eq:asympeins} holds also with $p$ given by \eqref{eqn:p(e)} and concludes the proof of Thm.~\ref{thm:subresult}.
\end{proof}

\begin{remark}\label{rmk:monophaseQ}
	Under the additional assumptions that $\bd O\ci F$ and that $\overline O\setminus \overline{\mathbf S O}$ possesses a finite number of connected components, Thm.~\ref{thm:subresult} was proven in \cite[Thm.~2.38]{Kombrink:thesis}, the Ph.\,D. thesis of the first author. There, the proof builds on results which were shown for the more general class of self-conformal sets by means of renewal theory in symbolic dynamics. Consequently, it contains arguments to overcome difficulties in the conformal setting which do not occur in the self-similar situation. The proof of Thm.~\ref{thm:subresult} presented here is shorter and more direct for the self-similar situation. Moreover, the statement of Thm.~\ref{thm:subresult} is more general; neither does it require $\bd O\ci F$, nor the assumption that  $\overline O\setminus \overline{\mathbf S O}$ possesses a finite number of connected components.
	
\end{remark}

\subsection{The case when $D = \dim_\sM F$ is not an integer.}

Now we restate and prove part \ref{it:intro:noninteger} of Thm.~\ref{thm:main-result}, the case when $D$ is not an integer; the integer case is discussed in Thm.~\ref{thm:pluriphase-integer-result}.

\begin{theorem}\label{thm:pluriphase-result}
	Let $F$ be a self-similar set which is the attractor of the lattice self-similar system $S=\{S_1,\dots,S_N\}$, $N\geq 2$ satisfying the OSC. Assume the Minkowski dimension $D = \dim_\sM F$ is not an integer and that we can find a strong feasible open set $O$ satisfying the projection condition (see Def.~\ref{def:metproj}) such that $F$ is pluriphase with respect to $\Gamma(O)$.
	In this situation, $F$ is not Minkowski measurable.
\end{theorem}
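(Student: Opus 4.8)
By Corollary~\ref{cor:subresult}, it suffices to show that the periodic function $p$ from \eqref{eqn:p(e)} (equivalently \eqref{eq:asympsum}) is \emph{non-constant} whenever $D\notin\mathbb Z$ and $F$ is pluriphase with respect to $\gG=\gG(O)$. The strategy is to compute $p$ explicitly on $(rg,g]$ using the pluriphase representation \eqref{eqn:pluriphase} and then argue by comparing the ``order of vanishing'' (or rather the leading-order behaviour) of $p$ near the endpoints of the interval, exploiting the fact that $D-d$ is not an integer.

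\textbf{Step 1: Plug the pluriphase formula into \eqref{eq:asympsum}.} Fix $\ge\in(rg,g]$. For $\ell\ge 1$ we have $r^\ell\ge\le rg<g$, so each term $\gl_d(F_{r^\ell\ge}\cap\gG)$ is a piecewise polynomial in $r^\ell\ge$ given by \eqref{eqn:pluriphase}, namely $\sum_{k=0}^d\gk_{m,k}(r^\ell\ge)^{d-k}$ on the block where $r^\ell\ge\in(a_{m-1},a_m]$. For $\ell=0$ and $\ge\in(a_{M-1},g]$ it is the top polynomial block. Substituting into \eqref{eq:asympsum} and multiplying by $\ge^{D-d}$, each monomial $(r^\ell\ge)^{d-k}$ contributes $r^{\ell(D-d)}r^{\ell(d-k)}\ge^{d-k}=r^{\ell(D-k)}\ge^{d-k}$, so the $\ell$-sums become geometric series with ratio $r^{D-k}$ — which converge since $D<d$ forces $r^{D-k}<1$ for $k\le d$, with the possible delicate case $k=d$ where $r^{D-d}<1$ still holds. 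The upshot is that on a subinterval of $(rg,g]$ on which the ``$\ell=0$ block'' does not change, $p(\ge)$ has the form
\begin{align}\label{eqn:proposal-p-form}
  p(\ge) = \sum_{k=0}^{d} c_k \ge^{d-k}\,\ge^{D-d} + (\text{terms piecewise constant in the block structure})
         = \sum_{k=0}^{d} c_k\,\ge^{D-k} + (\text{corrections}),
\end{align}
where the $c_k$ are explicit linear combinations of the $\gk_{m,k}$ and of $\gl_d(\gG)$; the ``corrections'' come from the fact that as $\ge$ decreases through a partition point $a_m/r^{\ell_0}$ the block index $m$ for the $\ell_0$-term jumps, producing finitely many jump discontinuities in the coefficients. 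Crucially, $p$ is, on each piece, a linear combination of the real powers $\ge^{D-k}$, $k=0,\dots,d$.

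\textbf{Step 2: Non-constancy via incommensurable exponents.} Now suppose, for contradiction, that $p(\ge)\equiv C$. On the top piece near $\ge=g$ (where only the $\ell=0$ block is the top polynomial and no jump has occurred), \eqref{eqn:proposal-p-form} gives $\sum_{k=0}^d c_k\ge^{D-k}=C$ identically on an open subinterval. Since $D$ is not an integer, the exponents $D-0,D-1,\dots,D-d$ are pairwise distinct and none of them equals $0$; the functions $\ge\mapsto\ge^{D-k}$ together with the constant function $1$ are linearly independent on any open interval. Hence $c_k=0$ for all $k=0,\dots,d$ and $C=0$. But $p>0$ (as noted right after Corollary~\ref{cor:subresult}, e.g.\ from the strictly positive first term $\gl_d(\gG)/(r^{D-d}-1)$ in \eqref{eq:asympsum}), a contradiction. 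Therefore $p$ is non-constant, and Corollary~\ref{cor:subresult} yields that $F$ is not Minkowski measurable. (One should double-check that the coefficient $c_d$ of $\ge^{D-d}$ collects the $\gl_d(\gG)/(r^{D-d}-1)$ term correctly and that the geometric summation of the $k=d$ monomials $\gk_{m,d}$ combines with it; either way the argument only needs that \emph{not all} $c_k$ vanish, which follows since $p\not\equiv 0$.)

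\textbf{Main obstacle.} The delicate point is bookkeeping the ``corrections'' in \eqref{eqn:proposal-p-form}: as $\ge$ ranges over $(rg,g]$, for each $\ell\ge 1$ the quantity $r^\ell\ge$ sweeps across the partition points $a_1,\dots,a_{M-1}$, so the coefficients $c_k=c_k(\ge)$ are themselves piecewise constant with (countably many, accumulating at $0$ only in the $\ell\to\infty$ tail, but finitely many inside the fixed compact interval after accounting for periodicity) jumps. This makes $p$ genuinely piecewise-$\sum_k c_k\ge^{D-k}$ rather than globally so. The clean way around it is to \emph{not} try to show $p$ is non-constant on all of $(rg,g]$ at once, but to isolate the single subinterval adjacent to $\ge=g$ on which no jump has yet occurred (there the representation is exactly \eqref{eqn:proposal-p-form} with constant $c_k$), and run the linear-independence argument of Step~2 there. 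If $p$ were constant on $(rg,g]$ it is in particular constant on that subinterval, which is all we need. A secondary point worth a sentence is the edge case where $F$ is monophase ($M=1$): then there are no jumps at all on $(0,g]$, $p$ is globally of the form \eqref{eqn:proposal-p-form} with constant coefficients, and the same linear-independence argument applies verbatim — this also foreshadows why, in the integer case (Theorem~\ref{thm:pluriphase-integer-result}), monophase forces non-measurability while pluriphase can go either way: when $D\in\mathbb Z$ the exponent $D-D=0$ coincides with the constant function, so the obstruction to $p\equiv C$ disappears for that one monomial and survives only as the algebraic relations among the remaining $\gk_{m,k}$.
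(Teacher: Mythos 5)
Your strategy coincides with the paper's: substitute the pluriphase representation \eqref{eqn:pluriphase} into $p$, observe that on each of finitely many subintervals of $(rg,g]$ the function $p$ is a fixed linear combination of the real powers $\ge^{D-k}$, $k=0,\dots,d$, and then use linear independence of these powers together with the constant function (valid precisely because $D\notin\mathbb Z$, so no exponent $D-k$ vanishes) and the strict positivity of $p$ to exclude $p\equiv C$. The paper organizes the bookkeeping through the counting functions $L_m(\ge)=\lceil\log_r(a_m/\ge)\rceil$ and argues via $p'=0$ on a subinterval rather than adjoining the constant function to the independent family, but these differences are cosmetic; restricting to the single subinterval adjacent to $g$, as you do, is also legitimate, since constancy of $p$ on $(rg,g]$ implies constancy there.

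However, there is a genuine error in your Step 1. You justify convergence of the $\ell$-series by asserting that ``$D<d$ forces $r^{D-k}<1$ for $k\leq d$, with the possible delicate case $k=d$ where $r^{D-d}<1$ still holds.'' This is false: since $0<r<1$ one has $r^{D-k}<1$ only for $k<D$, whereas for every $k>D$ (in particular $k=d$) one has $r^{D-k}>1$ and the geometric series $\sum_{\ell\geq 0}r^{\ell(D-k)}$ diverges; note that the first term of \eqref{eq:asympsum} is written with the \emph{positive} denominator $r^{D-d}-1$. Consequently the rearrangement of the double sum into $\sum_{k} c_k\,\ge^{D-k}$ is not justified as it stands. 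The missing ingredient is that the coefficients of the innermost polynomial block must satisfy $\gk_{1,k}=0$ for all $k\geq D$; this follows from the strong feasibility of $O$ via the decay estimate $\gl_d(F_\ge\cap\gG)\leq c\,\ge^{d-D+\gamma}$ coming from \eqref{eq:FeoutsideSO}, since otherwise $\gl_d(F_\ge\cap\gG)=\sum_{k}\gk_{1,k}\ge^{d-k}$ on $(0,a_1]$ would decay too slowly as $\ge\to 0$. With this observation, for each $k> D$ only finitely many $\ell$ contribute a nonzero term (namely those with $r^{\ell}\ge>a_1$), the coefficients $c_k$ are well defined, and the remainder of your argument goes through. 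The paper flags exactly this vanishing as crucial; without it your displayed representation of $p$ on the top subinterval is unsupported.
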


\begin{remark}\label{rem:hypotheses-clarification}
	Note that the hypothesis that $D$ is not an integer implies that $D<d$. Therefore, Prop.~\ref{cor:OSC-dimension-d-implies-trivial} ensures that the nontriviality condition will be met for any feasible open set. Therefore, the set $\Gamma$ is always nonempty and we do not need to include nontriviality in the hypothesis of Thm.~\ref{thm:pluriphase-result}.
\end{remark}

\begin{proof}[Proof of Thm.~\ref{thm:pluriphase-result}.]
We assume the partition we work with is minimal (cf.~Def.~\ref{def:pluriphase}). In the case $M\geq 2$ we will use the fact that for $\ell\in\mathbb Z$ and $m=2,\ldots,M$ we have the equivalences
\linenopax
\begin{align*}
	r^{\ell}\ge\in\left(a_{m-1},a_m\right]\ \
	\Leftrightarrow\ \ \ell\in \left[\log_r\tfrac{a_m}{\ge},\log_r\tfrac{a_{m-1}}{\ge}\right)\cap\mathbb Z\ \
	\Leftrightarrow\ \ \ell\in\Big{\{}L_m(\ge),\ldots,L_{m-1}(\ge)-1\Big{\}}.
\end{align*}
Here
\linenopax
\begin{align}\label{eqn:Lm}
	L_m(\ge) \coloneqq \left\lceil \log_r \frac{a_m}\ge \right\rceil, \qquad m=1,\ldots, M\,,
\end{align}
where $\lceil x \rceil$ is the ceiling function. In the case $M\geq 1$ and $m=1$ we have
\linenopax
\begin{align*}
	r^{\ell}\ge\in\left(0,a_1\right]\ \
	\Leftrightarrow\ \ \ell\in \left[L_1(\ge),\infty\right)\cap\mathbb Z.
\end{align*}
See Fig.~\ref{fig:Lm} and Ex.~\ref{ex:Sierpinski:gasket} in Sec.~\ref{sec:examples} for examples of the form the functions $L_m$ take.
	\begin{figure}[b]
  \centering
		\scalebox{0.55}{\includegraphics{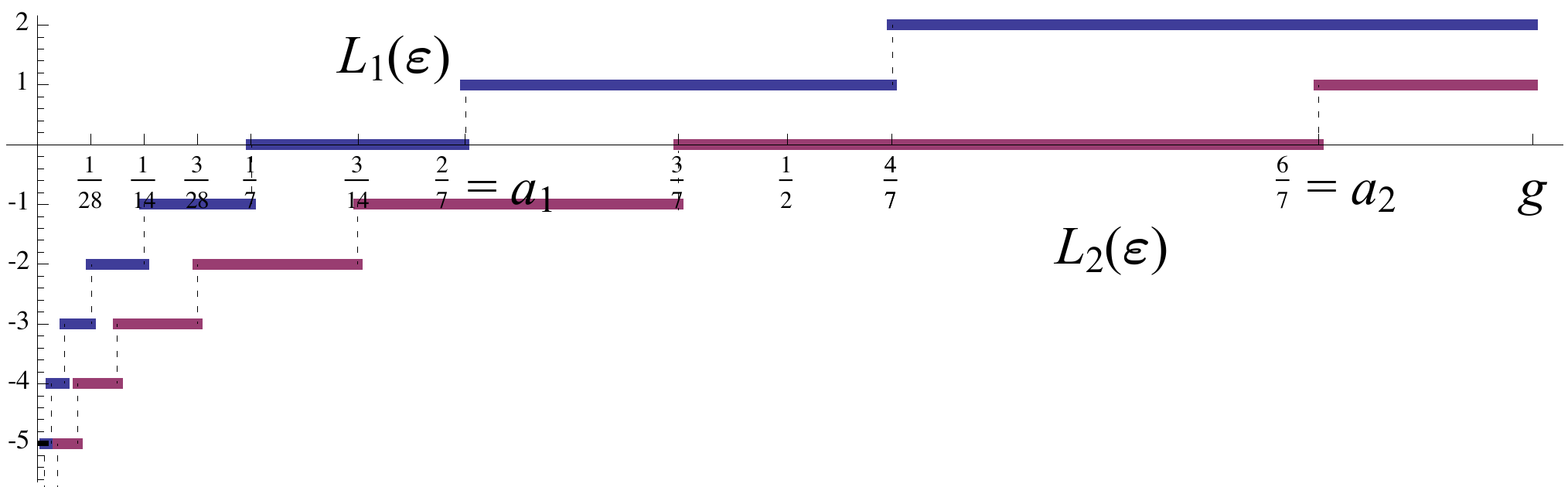}}
		\caption{\captionsize The functions $L_1(\ge)$ and $L_2(\ge)$ as defined in \eqref{eqn:Lm} for an example where $a_1 = \frac27, a_2 = \frac67, r=\frac12,$ and $g=1$. (For clarity, $L_3(\ge)$ is not depicted here.)}
		\label{fig:Lm}
	\end{figure}
Substituting the pluriphase representation \eqref{eqn:pluriphase} into \eqref{eqn:p(e)} and using the funtions $L_m$, we obtain
\linenopax
\begin{align}
	p(\ge)
	&=\ge^{D-d}\sum_{\ell\in\mathbb Z}r^{\ell(D-d)}\left[\sum_{m=1}^M\one_{(a_{m-1},a_m]}(r^{\ell}\ge)\sum_{k=0}^d\kappa_{m,k}(r^{\ell}\ge)^{d-k}+\one_{(g,\infty)}(r^{\ell}\ge)\lambda_d(\Gamma)\right] \notag \\
	&=\ge^{D-d}\Bigg{[}\sum_{\ell=L_1(\ge)}^{\infty}r^{\ell(D-d)}\sum_{k=0}^d\kappa_{1,k}(r^{\ell}\ge)^{d-k}
	+\sum_{m=2}^M\sum_{\ell=L_m(\ge)}^{L_{m-1}(\ge)-1}r^{\ell(D-d)}\sum_{k=0}^d\kappa_{m,k}(r^{\ell}\ge)^{d-k} \notag \\
	&\hstr[10]	+\sum_{\ell=-\infty}^{L_M(\ge)-1}r^{\ell(D-d)}\lambda_d(\Gamma)\Bigg{]}.
	\label{eqn:p(e)-pluriphase-expansion}
\end{align}
Since $\lim_{\ge \to 0^+} \lambda_d(F_{\ge}\cap\Gamma) = 0$, we know that $\gk_{1,d}=0$, but in fact more is true: the strong feasibility of $O$ allows us to again invoke \eqref{eq:FeoutsideSO} and thereby deduce that $\kappa_{1,k}=0$ for all $k \geq D$. This remark is crucial since it ensures absolute convergence of the first series. Since also the other series are absolutely convergent we may change the order of summation and evaluate the series over $\ell$ by means of the geometric series to obtain
\linenopax
\begin{align*}
	p(\ge)
	&= \sum_{k=0}^d\kappa_{1,k} \frac{(r^{L_1(\ge)}\ge)^{(D-k)}}{1-r^{D-k}}
	+ \sum_{k=0}^d\sum_{m=2}^M\kappa_{m,k}\ge^{D-k}\frac{r^{L_m(\ge)(D-k)}-r^{L_{m-1}(\ge)(D-k)}}{1-r^{D-k}} - \lambda_d(\Gamma)\frac{(r^{L_M(\ge)}\ge)^{(D-d)}}{1-r^{D-d}} \\
	&= \sum_{m=1}^{M-1} \sum_{k=0}^d \frac{(r^{L_m(\ge)} \ge)^{D-k}}{1-r^{D-k}} (\kappa_{m,k}-\kappa_{m+1,k})
	+ \sum_{k=0}^d \kappa_{M,k} \frac{(r^{L_M(\ge)} \ge)^{(D-k)}}{1-r^{D-k}}
	- \frac{\lambda_d(\Gamma) (r^{L_M(\ge)}\ge)^{(D-d)}}{1-r^{D-d}}.
\end{align*}
Setting
\linenopax
\begin{align}\label{eqn:kappa-conventions}
	\kappa_{M+1,k} \coloneqq 0 \q \text{for } k=0,\ldots,d-1,
	\qq \text{and} \qq
	\kappa_{M+1,d} \coloneqq \lambda_d(\Gamma),
\end{align}
we may write
\linenopax
\begin{equation}\label{eqn:p(e)-in-terms-of-eta_k}
	p(\ge) = \sum_{k=0}^d \frac{\ge^{D-k}}{1-r^{D-k}} \eta_k(\ge),
\end{equation}
where
\linenopax
\begin{equation}\label{eqn:eta_k}
	\eta_k(\ge) \coloneqq \sum_{m=1}^M r^{L_m(\ge)(D-k)}(\kappa_{m,k}-\kappa_{m+1,k}).
\end{equation}

Our aim is to show that $p$ is non-constant. For this, we restrict our investigations to an interval whose length aligns with the multiplicative period $r$ of $p$, namely the interval $(rg,g]$.  On this interval, each $L_m$ is piecewise constant with at most one point of discontinuity. Therefore each $\eta_k$ is piecewise constant on $(rg,g]$ with at most $M$ points of discontinuity. Moreover, these points of discontinuity coincide for each $k=0,\ldots,d$. Therefore, there is a finite number of disjoint intervals of strictly positive length
\linenopax
\begin{align}\label{eqn:Iq}
	I_1,\ldots,I_Q \qq\text{with}\q (rg,g] = \bigcup_{q=1}^Q I_q,
\end{align}
such that all $\eta_k$ are constant on each $I_q$. We denote the constant value of $\gh_k$ on $I_q$ by $\gb_{k,q}$.

The strict positivity of $p$ on $(rg,g]$ implies, that for each $q\in\{1,\ldots,Q\}$ there exists some $k\in\{0,1,\dots,d\}$ such that $\gb_{k,q} \neq 0$. 
Indeed, if all $\gb_{k,q}$ are zero for some $q$, then obviously $p$ would be identically zero on $I_q$, in contradiction with its positivity.

Now fix some $q_0$ and assume $p$ is constant on $I_{q_0}$. Then $p'(\ge)=0$ on the interior  $\inn(I_{q_0})$ of $I_{q_0}$, which would imply
\linenopax
\begin{align}\label{eqn:p'}
	0 = \sum_{k=0}^d \frac{D-k}{1-r^{D-k}} \gb_{k,q_0} \ge^{D-k-1},
	\qq\text{ for each } \ge\in \inn (I_{q_0}).
\end{align}
However, this contradicts the linear independence of the power functions $\{\ge^{D-1}, \dots,\ge^{D-d-1}\}$ on $I_{q_0}$. Hence $p$ is not a constant function and it follows now by Cor.~\ref{cor:subresult} that $F$ is not Minkowski measurable. 
\end{proof}

\subsection{The case when $D = \dim_\sM F$ is an integer.} 

When the Minkowski dimension $D$ of $\attr$ is an integer then both are possible: $\attr$ can be Minkowski measurable or non-Minkowski measurable. In our main theorem of this section, Thm.~\ref{thm:pluriphase-integer-result}, we provide equivalent characterizations of Minkowski measurability in the current setting.
 
	Suppose that $F$ is pluriphase with respect to $\Gamma$. We consider the functions $L_m$ as defined in \eqref{eqn:Lm}. Examples of these functions appear in Fig.~\ref{fig:Lm} and Fig.~\ref{fig:Lm2}. In the proof of Thm.~\ref{thm:pluriphase-result}, we used the facts that each $L_m$ is piecewise constant on  $(rg,g]$ with at most one point of discontinuity to deduce that there is a finite number of disjoint intervals $\{I_q\}_{q=1}^Q$ of strictly positive length such that $(rg,g] = \bigcup_{q=1}^Q I_q$ and such that every $L_m$ is constant on each interval $I_q$ ; see \eqref{eqn:Iq}.

	For the formulation of one of the equivalent characterizations of Minkowski measurability in Thm.~\ref{thm:pluriphase-integer-result}, it is convenient to group together those indices $m$ for which $L_m$ has the same point of discontinuity in $(rg,g]$:
\linenopax
\begin{align}
	U_q &\defeq \Big{\{}m\in\{1,\ldots,n\}\suth  L_m(\ge_q)\neq L_m(\ge_{q+1})\ \text{for}\ \ge_q\in I_q,\ \ge_{q+1}\in I_{q+1}\Big{\}},\q q<Q,\label{eqn:Uq}\\
	U_Q &\defeq \{1,\ldots,M\}\setminus\bigcup_{q=1}^{Q-1}U_q.
		\label{eqn:UQ}
\end{align}
Note that for each $m\in U_Q$ the function $L_m$ is a constant function on $(rg,g]$.

We now give a precise statement of part \ref{it:intro:integer} of Thm.~\ref{thm:main-result}, that is for the case when $D$ is an integer.

\begin{theorem}\label{thm:pluriphase-integer-result}
Let $F\subset\bR^d$ be a self-similar set which is the attractor of the lattice self-similar system $S=\{S_1,\dots,S_N\}$, $N\geq 2$ satisfying the OSC. Assume the Minkowski dimension $D = \dim_\sM F$ is an integer different from $d$ and that we can find a strong feasible open set $O$ satisfying the projection condition (see Def.~\ref{def:metproj}) such that $F$ is pluriphase with respect to $\Gamma(O)$. Let $C>0$.	
Then the following assertions are equivalent:
\begin{enumerate}[label=(\arabic*)]
	\item\label{it:Mmb} $F$ is Minkowski measurable with Minkowski content given by
	\begin{equation}
		\sM_D(F)=\frac{\ln r}{\sum_{i=1}^N r_i^D \ln r_i} \cdot C\,.
	\end{equation}
	\item\label{it:constant} The function $p$ from \eqref{eqn:p(e)} is a constant function taking the value $C$.
	\item\label{it:algebraicLmwithoutq} For $\ge\in(rg,g]$,
		\linenopax
		\begin{align}
			C &= \sum_{m=1}^{M} L_m(\ge)(\kappa_{m+1,D}-\kappa_{m,D})
			\q\text{and}\q \label{eqn:DintegerC1}\\
			0 &= \sum_{m=1}^{M} r^{L_m(\ge)(D-k)}(\kappa_{m,k}-\kappa_{m+1,k})\quad\text{for } k\neq D .\label{eqn:Dintegernull1}
		\end{align}
	\item\label{it:algebraicaq} With $U_q$ defined as in \eqref{eqn:Uq}--\eqref{eqn:UQ} and $g$ as in \eqref{eqn:g},
          \linenopax
	\begin{equation}\label{eqn:Dintegernull2}
            0=\sum_{m\in U_q}a_m^{D-k}(\kappa_{m,k}-\kappa_{m+1,k})
          \end{equation}
          for all pairs $(k,q)\in(\{0,\ldots,d\}\times\{1,\ldots,Q\})\setminus\{(D,Q)\}$, and
          \begin{equation}\label{eqn:DintegerC2}
            C=\sum_{m\in U_Q}L_m(g)(\kappa_{m+1,D}-\kappa_{m,D}).
          \end{equation}
          
	\end{enumerate}
	Moreover, if $O$ is such that $F$ is monophase w.r.t.\ $\Gamma$, then these assertions are never met and so in particular $F$ is not Minkowski measurable.
%
\end{theorem}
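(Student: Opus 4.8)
The plan is to establish the chain $\ref{it:Mmb}\Leftrightarrow\ref{it:constant}\Leftrightarrow\ref{it:algebraicLmwithoutq}\Leftrightarrow\ref{it:algebraicaq}$ and then read off the monophase claim. The first equivalence, together with the formula for $\sM_D(F)$, is immediate from Cor.~\ref{cor:subresult}: the prefactor $\ln r/\sum_i r_i^D\ln r_i$ is positive (numerator and denominator are both negative), and $p$ is everywhere strictly positive, so ``$p\equiv C$ with $C>0$'' is precisely the Minkowski measurable case with the stated content.

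For $\ref{it:constant}\Leftrightarrow\ref{it:algebraicLmwithoutq}$ I would redo the computation from the proof of Thm.~\ref{thm:pluriphase-result}, substituting the pluriphase representation \eqref{eqn:pluriphase} into \eqref{eqn:p(e)} and evaluating the $\ell$-series. The only new feature in the integer case is the exponent $k=D$: there $r^{\ell(D-k)}=1$, so the inner sum over $\ell\in\{L_m(\ge),\dots,L_{m-1}(\ge)-1\}$ is just the cardinality $L_{m-1}(\ge)-L_m(\ge)$, and (using $\kappa_{1,D}=0$, which follows from strong feasibility and \eqref{eq:FeoutsideSO} exactly as in the proof of Thm.~\ref{thm:pluriphase-result}, together with the conventions \eqref{eqn:kappa-conventions}) the $k=D$ part of $p$ contributes precisely the summand $\sum_{m=1}^M L_m(\ge)(\kappa_{m+1,D}-\kappa_{m,D})$, while all $k\neq D$ recombine, as before, into $\sum_{k\neq D}\tfrac{\ge^{D-k}}{1-r^{D-k}}\eta_k(\ge)$ with $\eta_k$ as in \eqref{eqn:eta_k} (here $D\neq d$ is used, so that the $\ge^{D-d}$-term carrying $\lambda_d(\Gamma)$ is a genuine power term, not another constant). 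Thus $p(\ge)$ equals this sum. By multiplicative periodicity it suffices to work on $(rg,g]$, which one splits into the finitely many intervals $I_q$ of \eqref{eqn:Iq} on which every $L_m$ — hence every $\eta_k$ and the $k=D$ summand — is constant; on such an interval ``$p\equiv C$'' reads $C=(\text{const})+\sum_{k\neq D}(\text{const})\,\ge^{D-k}$, and since the exponents $D-k$ with $k\neq D$ are nonzero and distinct, linear independence of $\{1\}\cup\{\ge^{D-k}:k\neq D\}$ on an interval of positive length forces the constant to be $C$ and each $\eta_k|_{I_q}$ to vanish. Running over all $q$ gives exactly \eqref{eqn:DintegerC1} and \eqref{eqn:Dintegernull1}.

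The equivalence $\ref{it:algebraicLmwithoutq}\Leftrightarrow\ref{it:algebraicaq}$ is a reorganisation of the piecewise constant identities \eqref{eqn:DintegerC1}, \eqref{eqn:Dintegernull1} in terms of the jumps of the $L_m$ on $(rg,g]$. On this interval each $L_m$ is constant if $m\in U_Q$ and otherwise has a single upward jump by $1$, shared — by the definition \eqref{eqn:Uq}--\eqref{eqn:UQ} of the $U_q$ — with the other indices in its group; the key elementary identity is that just to the left of this jump one has $r^{L_m(\ge)}\ge=a_m$ (because there $L_m(\ge)=\log_r(a_m/\ge)$), and $r^{L_m(g)}g=a_m$ for $m\in U_Q$. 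Rewriting $\eta_k$ through this identity and subtracting the versions of \eqref{eqn:Dintegernull1} on two consecutive intervals $I_q,I_{q+1}$ isolates a nonzero multiple of $\sum_{m\in U_q}a_m^{D-k}(\kappa_{m,k}-\kappa_{m+1,k})$; doing this for $q=1,\dots,Q-1$ and then reading off the value on the last interval yields all of \eqref{eqn:Dintegernull2} with $k\neq D$. The same manipulation of the piecewise linear $k=D$ summand turns its constancy into the $(D,q)$, $q<Q$, instances of \eqref{eqn:Dintegernull2} and its value (at $\ge=g\in I_Q$) into \eqref{eqn:DintegerC2}. I expect this to be the main obstacle: one must track precisely which indices $m$ still contribute to $\eta_k$ and to the $k=D$ term on each interval, and see how the conventions \eqref{eqn:kappa-conventions}, the continuity of $\ge\mapsto\lambda_d(F_\ge\cap\Gamma)$ at the $a_m$, and the minimality of the pluriphase representation combine so that the value condition collapses onto the indices in $U_Q$. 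It is convenient to carry the computation out in the additive coordinate $u=\log_r(\ge/g)\in[0,1)$, where $L_m(\ge)=\lceil\log_r(a_m/g)-u\rceil$ and the jump points of the $L_m$ are the fractional parts $\{\log_r(a_m/g)\}$.

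For the monophase case, note that $a_M=g$ forces $L_M\equiv 0$ on $(rg,g]$; when $M=1$ this is $L_1$, so the $k=D$ summand $L_1(\ge)(\kappa_{2,D}-\kappa_{1,D})$ of $p$ vanishes identically there, and $\eta_k\equiv\kappa_{1,k}-\kappa_{2,k}$ is constant, so $p$ restricted to $(rg,g]$ is a linear combination of the power functions $\ge^{D-k}$, $k\neq D$, with no constant term — and its $\ge^{D-d}$-coefficient $-\lambda_d(\Gamma)/(1-r^{D-d})$ is nonzero. Hence $p$ is non-constant, and by Cor.~\ref{cor:subresult} $F$ is not Minkowski measurable; equivalently, \eqref{eqn:DintegerC1} would force $C=0$, contradicting $C>0$, so none of \ref{it:Mmb}--\ref{it:algebraicaq} can hold.
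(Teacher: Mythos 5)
Your proposal is correct and follows essentially the same route as the paper's own proof: \ref{it:Mmb}$\Leftrightarrow$\ref{it:constant} from Cor.~\ref{cor:subresult}; \ref{it:constant}$\Leftrightarrow$\ref{it:algebraicLmwithoutq} by rerunning the pluriphase expansion of $p$ with the $k=D$ geometric sum replaced by the counting sum $\sum_{m}L_m(\ge)(\kappa_{m+1,D}-\kappa_{m,D})$ and invoking linear independence of $\{1\}\cup\{\ge^{D-k}:k\neq D\}$ on each $I_q$; \ref{it:algebraicLmwithoutq}$\Leftrightarrow$\ref{it:algebraicaq} by the jump analysis of the piecewise constant sums $A_{q,k}$ across consecutive intervals together with the identity $r^{L_m(\ge)}\ge=a_m$ at the common discontinuity of the $L_m$, $m\in U_q$; and the monophase claim from $\kappa_{1,d}=0\neq\lambda_d(\gG)=\kappa_{2,d}$ with $d\neq D$. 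The one step you single out as the main obstacle---collapsing the value condition onto the indices in $U_Q$---is handled in the paper by exactly the telescoping argument you sketch and with no additional detail, so you have not missed any idea that the paper supplies.
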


%

\begin{remark}\label{def:pluriscaling}
	Nonmeasurability arising from lattice-type iterated function systems is due to geometric oscillations arising from the alignments of multiplicative periods in the scaling factors of the  various mappings in the IFS; see \cite{FGCD}. In some sense, the algebraic conditions formulated in \ref{it:algebraicLmwithoutq} and more clearly in \ref{it:algebraicaq} describe the situation when there are extra geometric oscillations induced by the pluriphase representation of the volume function $\lambda_d(F_\ge\cap \Gamma)$ that cancel out the geometric oscillations intrinsic to the IFS. These conditions should be viewed as a kind of \emph{latticeness} of the representation (and thus of the set $\Gamma$ in relation with $\attr$).
\end{remark}


\begin{remark}\label{rem:hypotheses-clarification-2}
	Note that the hypothesis $D \neq d$ is equivalent to the hypothesis that $F$ is nontrivial, by Prop.~\ref{cor:OSC-dimension-d-implies-trivial}.
\end{remark}

\begin{proof}[Proof of Thm.~\ref{thm:pluriphase-integer-result}]
The equivalence of the assertions \ref{it:Mmb} and \ref{it:constant} is clear from Cor.~\ref{cor:subresult}.
To show the equivalence \ref{it:constant} $\Leftrightarrow$ \ref{it:algebraicLmwithoutq},
we go again through the steps of the proof of Thm.~\ref{thm:pluriphase-result} and point out the modifications necessary in the case when $D$ is an integer. Up to equation \eqref{eqn:p(e)-pluriphase-expansion} there is no difference in the derivation, but from this equation onwards, the $D^{\textrm{th}}$ terms in all the summations over $k$ have to be treated differently. First, notice that
$\gk_{1,D}=0$ according to the discussion just after \eqref{eqn:p(e)-pluriphase-expansion} (since $D \leq D$). So there is no concern regarding the summation from $L_1(\ge)$ to $\iy$ for the $D^{\textrm{th}}$ terms in the first summand in \eqref{eqn:p(e)-pluriphase-expansion}, they just vanish. The second summand in \eqref{eqn:p(e)-pluriphase-expansion} changes; we rewrite it here with the term for $k=D$ extracted:
\linenopax
\begin{align*}
	\ge^{D-d}\sum_{m=2}^M\sum_{\ell=L_m(\ge)}^{L_{m-1}(\ge)-1}r^{\ell(D-d)}\sum_{\substack{k=0 \\k \neq D}}^d\kappa_{m,k}(r^{\ell}\ge)^{d-k} + \ge^{D-d}\sum_{m=2}^M\sum_{\ell=L_m(\ge)}^{L_{m-1}(\ge)-1}r^{\ell(D-d)} \kappa_{m,D}(r^{\ell}\ge)^{d-D}
\end{align*}
The first expression is dealt with exactly as in the proof of Thm.~\ref{thm:pluriphase-result}. For the second term, we have
\linenopax
\begin{align*}
	\gh_D(\ge)
	&\coloneqq \ge^{D-d}\sum_{m=2}^M\sum_{\ell=L_m(\ge)}^{L_{m-1}(\ge)-1}r^{\ell(D-d)} \kappa_{m,D}(r^{\ell}\ge)^{d-D}
	= \sum_{m=2}^M\sum_{\ell=L_m(\ge)}^{L_{m-1}(\ge)-1} \kappa_{m,D}
	= \sum_{m=1}^M L_m(\ge) (\kappa_{m+1,D} - \kappa_{m,D})\,,
\end{align*}
where we have used the notation introduced in \eqref{eqn:kappa-conventions}.
Thus we obtain the following modification of \eqref{eqn:eta_k}
\linenopax
\begin{equation}\label{eqn:eta_k-and-eta_D}
	\eta_k(\ge)\defeq\begin{cases}
	\sum_{m=1}^M r^{L_m(\ge)(D-k)}(\kappa_{m,k}-\kappa_{m+1,k}),   & k \neq D,\\
	\sum_{m=1}^M L_m(\ge)(\kappa_{m+1,k}-\kappa_{m,k}),   & k = D,
\end{cases}
\end{equation}
and \eqref{eqn:p(e)-in-terms-of-eta_k} is replaced by
\linenopax
\begin{equation}\label{eqn:p(e)-in-terms-of-eta_j}
	p(\ge) = \eta_D(\ge) + \sum_{\substack{k=0\\k \neq D}}^d \frac{\ge^{D-k}}{1-r^{D-k}} \eta_k(\ge),
\end{equation}
where still all the functions $\eta_k(\ge)$ are piecewise constant with finitely many pieces in $(rg,g]$.

Now assume that \ref{it:constant} holds, i.e.\ $p(\ge)=C$ for $\ge\in(rg,g]$ and some $C>0$. Restricting to an arbitrary subinterval on which the functions $\eta_k$ are all constant, we can use again the linear independence of the functions $\ge^{D-k}$, $k=0,\ldots,d$ to conclude that $\eta_k(\ge)=0$ for $k\neq D$ and thus $\eta_D(\ge)=C$ on this subinterval. Since this applies to all such subintervals, it holds on $(rg,g]$, which shows that \ref{it:constant} implies \ref{it:algebraicLmwithoutq}. The reverse implication is obvious from equation \eqref{eqn:p(e)-in-terms-of-eta_j}.
%
%

Our next step is to show the equivalence of \ref{it:algebraicLmwithoutq} and \ref{it:algebraicaq}.

(i) First, we consider the case $k=D$ and show that \eqref{eqn:DintegerC2} together with \eqref{eqn:Dintegernull2} holding for pairs $(k,q)\in\{D\}\times\{1,\ldots,Q-1\}$ is equivalent to \eqref{eqn:DintegerC1}.

Observe that \eqref{eqn:DintegerC1} 
can be rewritten as 
	\linenopax
	\begin{align}\label{eq:cAq}
		C	&= \sum_{m=1}^M  L_m(\ge)(\kappa_{m+1,D}-\kappa_{m,D})
			 = \sum_{q=1}^Q \underbrace{\sum_{m\in U_q}  L_m(\ge)(\kappa_{m+1,D}-\kappa_{m,D})}_{\eqdef A_{D,q}(\ge)}
	\end{align}
	for all $\ge\in(rg,g]$. 
 Note that for $q=Q$ the sum $A_{D,Q}(\ge)=A_{D,Q}(g)$ is independent of $\ge\in(rg,g]$ by construction. Thus, if $Q=1$ then \eqref{eqn:DintegerC1} is equivalent to \eqref{eqn:DintegerC2}. Now consider the case $Q\geq 2$ and fix $q\in\{1,\ldots,Q-1\}$.
By construction $A_{D,q'}$ is constant on $I_q\cup I_{q+1}$ when $q'\in\{1,\ldots,Q-1\}\setminus\{q\}$. Moreover, for $m\in U_q$ we have $L_m(\ge_{q+1})-L_m(\ge_q)=1$ for $\ge_{q+1}\in I_{q+1}$ and $\ge_q\in I_q$, yielding
\linenopax
\begin{equation}
	 \left\lvert A_{D,q}(\ge_{q+1})-A_{D,q}(\ge_q)\right\rvert= \left\lvert\sum_{m\in U_q} (\kappa_{m+1,D}-\kappa_{m,D})\right\rvert.
\end{equation}
Thus, \eqref{eq:cAq} holds 
if and only if
	\linenopax
\begin{align*}
	0 &= \sum_{m\in U_q} (\kappa_{m+1,D}-\kappa_{m,D}) \q \text{for } q < Q,
	\qq\text{and}\qq
	C = \sum_{m\in U_Q} L_m(\ge) (\kappa_{m+1,D}-\kappa_{m,D})
\end{align*}
for $\ge\in(rg,g]$. Noting that $L_m(\ge)=L_m(g)$ holds for all $\ge\in(rg,g]$ whenever $m\in U_Q$, the statement (i) is verified.

(ii) Second, we consider the case $k \neq D$ and show that \eqref{eqn:Dintegernull2} holding for pairs $(k,q)\in(\{0,\ldots,d\}\setminus\{D\})\times\{1,\ldots,Q\}$ is equivalent to \eqref{eqn:Dintegernull1}.
 
Observe that, $\eta_k(\ge)=0$ on $(rg,g]$ if and only if
\linenopax
\begin{align*}
	0 = \gh_k(\ge)
	= \sum_{q=1}^Q \underbrace{\sum_{m\in U_q} r^{L_m(\ge)(D-k)}(\kappa_{m,k}-\kappa_{m+1,k})}_{\eqdef A_{q,k}(\ge)}, \q \text{on $(rg,g]$}.
\end{align*}
In the same way as in the case of $k=D$, one can deduce that $0=\sum_{q=1}^Q A_{q,k}(\ge)$ on $(rg,g]$ if and only if
	\linenopax
\begin{align}\label{eq:0Aqk}
	0=A_{q,k}(\ge)=\sum_{m\in U_q} r^{L_m(\ge)(D-k)}(\kappa_{m,k}-\kappa_{m+1,k})
	\q \text{on $(rg,g]$, \q for $q=1,\ldots,Q$}.
\end{align}
By definition of the sets $U_q$, we know that $\{\log_r\tfrac{a_m}{\ge}\}=\{\log_r\tfrac{a_{m'}}{\ge}\}$ for $m,m'\in U_q$. Denoting this common value by $g_q(\ge)$, Equation \eqref{eq:0Aqk} is equivalent to
	\linenopax
\begin{align*}
	0 = r^{(1-g_q(\ge))(D-k)}\sum_{m\in U_q}\left(\frac{a_m}{\ge}\right)^{D-k} (\kappa_{m,k}-\kappa_{m+1,k})
	\q \text{on $(rg,g]$, \q for $q=1,\ldots,Q$}.
\end{align*}
This is equivalent to \eqref{eqn:Dintegernull2}, since $k\neq D$ and the power functions $\{\ge^{-D},\ldots,\ge^{d-D}\}$ are linearly independent. Thus, \ref{it:algebraicLmwithoutq} is equivalent to assertion \ref{it:algebraicaq}. 

Finally, if $F$ is monophase w.r.t.\ $\Gamma$, then $M=1$. 
The discussion directly after \eqref{eqn:p(e)-pluriphase-expansion} gives $\kappa_{1,d}=0$, and \eqref{eqn:kappa-conventions} gives $\kappa_{2,d}=\lambda_d(\gG)\neq0$. Therefore, \eqref{eqn:Dintegernull1} cannot be satisfied for $k=d$
and the last assertion in Thm.~\ref{thm:pluriphase-integer-result} follows. 
\end{proof}

\section{Examples}\label{sec:examples}

In the case $D\in\mathbb N$ there are examples of self-similar sets arising from lattice IFS which are Minkowski measurable. In Ex.~\ref{ex:square} we provide a simple example of a lattice IFS with common scaling ratio $r=\frac12$ for which the attractor has integer Minkowski dimension $D = \dim_\sM F = 2$. Note that this example comes by ``cheating'' the nontriviality condition via embedding in a higher-dimensional Euclidean space; see Rem.~\ref{thm:trivial-issues}.


\begin{ex}\label{ex:square}
	Let $S_1,\ldots,S_4\colon\mathbb R^3\to\mathbb R^3$ be given by
	\linenopax
        \begin{equation*}
	\begin{array}{rlrl}
	S_1(x)&\hspace{-0.7em}\defeq x/2,   &   S_3(x)&\hspace{-0.7em}\defeq x/2+ (0,1/2,0),\\
	S_2(x)&\hspace{-0.7em}\defeq x/2+ (1/2,0,0), \qquad\qquad   &   S_4(x)&\hspace{-0.7em}\defeq x/2+ (1/2,1/2,0).
	\end{array}
        \end{equation*}
	It is not difficult to see that $F\defeq [0,1]\times [0,1]\times \{0\}$ is the associated invariant set and that $D=2$, so we are in the case $D\in\mathbb N$.
	Define $O\defeq(0,1)\times (0,1) \times (-1/2,1/2)$. Then $O$ is a strong feasible open set for $\{S_1,\ldots,S_4\}$. Moreover the nontriviality condition is satisfied, since $\bigcup_{i=1}^4 \overline{S_i(O)}=[0,1]\times[0,1]\times[-1/4,1/4]$.
 With $\Gamma\defeq O\setminus \mathbf{S}O$ as before, 
	\begin{equation}
	\lambda_3(F_{\ge}\cap \Gamma)=
	\begin{cases}
	0, & \ge<1/4,\\
	2(\ge-1/4), & 1/4\leq\ge<1/2,\\
	1/2, & 1/2\leq \ge.
	\end{cases}
	\end{equation}
	Thus, $a_0\defeq 0$, $a_1=1/4$ and $a_2\defeq g=1/2$, which implies $\{\log_r (a_m)\}=0$ for $m=1,2$, since $r=1/2$.
        Hence, $Q=1$ with $Q$ as in \eqref{eqn:Iq}, implying $U_Q=\{1,2\}$. Moreover, $\kappa_{2,2}=2$, $\kappa_{2,3}=-1/2$, $\kappa_{3,3}=1/2$ and $\kappa_{m,k}=0$ for all other pairs $m,k$.
	Using Thm.~\ref{thm:pluriphase-integer-result} \ref{it:algebraicaq}, we conclude that $F$ is Minkowski measurable with Minkowski content 2.
	This can also be deduced directly by evaluating the function $p$.
\end{ex}

\begin{ex}\label{ex:Sierpinski:gasket}
  In this example, we return to the example which we provided in Fig.~\ref{fig:examplesnew}. 
  	\begin{figure}[t]
  \centering
		\scalebox{0.75}{\includegraphics{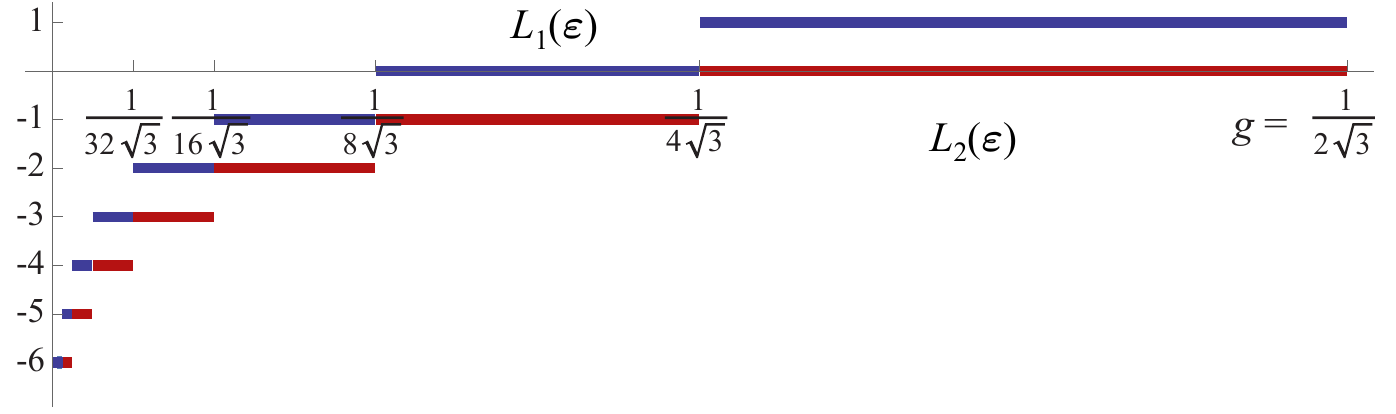}}
		\caption{\captionsize The functions $L_1(\ge)$ and $L_2(\ge)$ as defined in \eqref{eqn:Lm} for 
	the Sierpinski gasket with $O$ and $\gG$ as in Fig.~\ref{fig:examplesnew}, see Example~\ref{ex:Sierpinski:gasket} for details. Here, $a_1=\sqrt{3}/12$, $a_2=g=\sqrt{3}/6$ and $r=1/2$.}
		\label{fig:Lm2}
	\end{figure}
	Here, $S=\{S_1,S_2,S_3\}$ is the standard IFS which generates the Sierpinski gasket $\attr$, and $O$ and $\gG$ are as in Fig.~\ref{fig:examplesnew}. Then,
\begin{equation}\label{eq:Sierpgasket:polynom}
  \lambda_2(\attr_{\ge}\cap\gG)=
  \begin{cases}
    6\sqrt{3}\ge^2 &\colon 0\leq \ge<\sqrt{3}/12\\
    6\sqrt{3}\ge^2-3\ge+\sqrt{3}/4 &\colon \sqrt{3}/12\leq\ge<\sqrt{3}/6\\
    \sqrt{3}/4 &\colon \sqrt{3}/6\leq\ge.
  \end{cases}
\end{equation}
Thus, $\attr$ is pluriphase w.r.t.\ $\gG$. Moreover, $D=\dim_{\mathcal M}(\attr)=\log_2(3)\notin\mathbb N$ and $O$ is a strong feasible open set satisfying the projection condition. Therefore, we can apply Thm.~\ref{thm:pluriphase-result} and deduce that $\attr$ is not Minkowski measurable. 
For this example we want to visualize the functions $L_m$ from \eqref{eqn:Lm}, which the proof of Thm.~\ref{thm:pluriphase-result} heavily uses. From \eqref{eq:Sierpgasket:polynom} we conclude that $a_0\defeq 0$, $a_1=\sqrt{3}/12$ and $a_2\defeq g=\sqrt{3}/6$. Moreover, $r=1/2$. Therefore,
\linenopax
\begin{align*}
  L_1(\ge)&\defeq\left\lceil\log_r\left(\tfrac{a_1}{\ge}\right)\right\rceil
  =\left\lceil\tfrac{\ln(\sqrt{3}/12)-\ln(\ge)}{-\ln(2)}\right\rceil\q\text{and}\q
  L_2(\ge)&=\left\lceil\tfrac{\ln(\sqrt{3}/6)-\ln(\ge)}{-\ln(2)}\right\rceil.
\end{align*}
The plot of $L_1$ and $L_2$ is provided in Fig.~\ref{fig:Lm2}.
\end{ex}

\pgap

\bibliographystyle{amsalpha}
\bibliography{strings}

\end{document}